\documentclass{amsart}
\usepackage{amsmath,amssymb,amsthm,amsfonts,amscd,mathrsfs,mathtools}
\usepackage{tikz}
\usepackage{braids}
\usepackage[hyphens]{url}
\usepackage{caption}
\usepackage{hyperref}

\usepackage{mymacros}

\newcommand{\thmref}[1]{Theorem~\ref{#1}}

\newcommand{\lemref}[1]{Lemma~\ref{#1}}
\newcommand{\corref}[1]{Corollary~\ref{#1}}

\theoremstyle{plain}
    \newtheorem{thm}{Theorem}[section]
    \newtheorem{lem}[thm]   {Lemma}
    \newtheorem{cor}[thm]   {Corollary}

\theoremstyle{definition}
    \newtheorem{defn}[thm]  {Definition}
    
    \newtheorem{conj}[thm]{Conjecture}

    \newtheorem{rem}[thm]{Remark}

\begin{document}

\title[TC of unordered configuration spaces of surfaces]{Topological complexity of unordered configuration spaces of surfaces}

\author{Andrea Bianchi}
\author{David Recio-Mitter}

\address{Mathematics Institute,
University of Bonn,
Endenicher Allee 60, Bonn,
Germany
}

\address{Institute of Mathematics,
Fraser Noble Building,
University of Aberdeen,
Aberdeen AB24 3UE,
UK}

\email{bianchi@math.uni-bonn.de}

\email{david.reciomitter@abdn.ac.uk}

\date{\today}


\begin{abstract}
We determine the topological complexity of unordered configuration spaces on almost all punctured surfaces (both orientable and non-orientable). We also give improved bounds for the topological complexity of unordered configuration spaces on all aspherical closed surfaces, reducing it to three possible values. The main methods used in the proofs were developed in 2015 by Grant, Lupton and Oprea to give bounds for the topological complexity of aspherical spaces. As such this paper is also part of the current effort to study the topological complexity of aspherical spaces and it presents many further examples where these methods strongly improve upon the lower bounds given by zero-divisor cup-length.

\end{abstract}


\maketitle

\section{Introduction}\label{sec:intro}

In 2003 Farber introduced the topological complexity of a space to study the problem of robot motion planning from a topological perspective \cite{Far03}.
It is a numerical homotopy invariant which measures the minimal instability of every motion planner on this space.
More explicitly, given a path-connected space $X$, the topological complexity $\TC(X)$ is the sectional category of the free path fibration $p_X:X^I\to X\times X$ (see section \ref{sec:aspherical}).

Determining $\TC(X)$ is in general a hard problem. For over a decade the topological complexity of many spaces has been computed and diverse tools have been developed to that end.

In this context, configuration spaces have been extensively studied because they are of special interest from the point of view of robotics. Considering the problem of moving $n$ objects on a space $X$ avoiding collisions naturally leads to the definition of the \emph{ordered configuration space} $F(X,n)$ of $n$ distinct ordered points on $X$ as

\begin{align*}
 F(X,n)=\{(x_1,\ldots,x_n)\in X^n\,|\,x_i\neq x_j\;\text{for}\;i\neq j\}.
\end{align*}

These spaces model Automated Guided Vehicles (AGVs) moving on a factory floor \cite{Ghr} or flying drones trying to avoid each other in the air.

Farber and Yuzvinsky determined the topological complexity of $F(\R^d,n)$ for $d=2$ or $d$ odd in \cite{FY}. Later Farber and Grant extended the results to all dimensions $d$ in \cite{FG}. The topological complexity of ordered configuration spaces of orientable surfaces has also been computed by Cohen and Farber in \cite{CF}. Many more related results can be found in the recent survey articles \cite{Coh} and \cite{Far17}.

In the configuration spaces $F(X,n)$ considered above, the points of a configuration are labelled (or ordered) and the symmetric group $\sym_n$ acts on $F(X,n)$ by permuting the labels. However, in certain situations it greatly improves the efficiency to consider the points to be identical. For instance, consider a scenario in which all the AGVs perform the same tasks equally. In this case we are only interested in the positions of points in $X$ up to permutation, in other words forgetting the labels assigned to the points. This leads to the \emph{unordered configuration spaces} $C(X,n)=F(X,n)/\sym_n$, by definition the orbits of the symmetric group action.

As we saw above, there is a very complete picture of the topological complexity of \textit{ordered} configuration spaces of 2-dimensional manifolds and beyond. In contrast to this, very little is known for \textit{unordered} configuration spaces, as Cohen notes at the end of his survey article \cite{Coh}. One of the main reasons for this discrepancy is that all the above results use a cohomological technique involving \emph{zero-divisors}, which seems to be insufficient for unordered configuration spaces (at least with constant coefficients).

The results in this paper use a technique to bound the topological complexity of aspherical spaces developed in 2015 by Grant, Lupton and Oprea \cite{GLO}. Being a homotopy invariant, the topological complexity of an aspherical space only depends on its fundamental group and the methods are algebraic in nature. An introduction to topological complexity of groups is given in section \ref{sec:aspherical}.

The mentioned technique was already used in the recent paper \cite{GRM}, in which Grant and the second author computed the topological complexity of some mixed configuration spaces $F(\R^2,n)/(\sym_{n-k}\times\sym_k)$ on the plane, with $1\le k\le n-1$. These spaces are in a sense intermediate between the ordered and the unordered case and they model the situation in which there are two different types of identical AGVs. It turns out that also in the mixed case the cohomological lower bounds used in previous results are insufficient.

It has to be mentioned that the topological complexity of unordered configuration spaces of trees was computed in many cases by Scheirer in \cite{Sch}. To the best of the authors' knowledge that is the only previous computation of the topological complexity of an unordered configuration space with at least three points. It is worth noting that Scheirer uses the zero-divisor cup-length lower bound which seems to be insufficient for unordered configuration spaces of surfaces.

In this paper we determine the topological complexity of the unordered configuration spaces of all punctured surfaces (orientable and non-orientable) except the disc and the M\"obius band, and narrow it down to three values for all closed aspherical surfaces (orientable and non-orientable). For the M\"obius band we narrow it down to two values and for the disc we give some improved bounds and a complete answer in the case of three points. Many of the proofs extend to ordered configuration spaces (this is discussed at the end of the paper).

All results except the ones for the disc are presented in the following theorem, which follows from the theorems \ref{thm:lowergeneral}, \ref{thm:lowerannulus}, \ref{thm:uppergeneral} and \ref{thm:upperannulus}. In the case of the annulus the upper bound is proven by finding an explicit motion planner.

\begin{thm}\label{thm:surfaces}
\begin{itemize}
\item Let $\S$ be obtained from a closed surface by removing a positive number of points. 
If $\S$ is not the disc, the annulus or the M\"obius band, then
\[\TC(C(\S,n))=2n.\]
\item Let $\S$ be a closed surface. If $\S$ is not the
sphere or the projective plane, then
\[2n\le\TC(C(\S,n))\le2n+2.\]

\item If $\A$ denotes the annulus, then
$$\TC(C(\A,n))=2n-1.$$

\item If $\M$ denotes the M\"obius band, then
$$2n-1\le\TC(C(\M,n))\le2n.$$

\end{itemize}
\end{thm}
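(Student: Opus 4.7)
Since $C(\Sigma, n)$ is aspherical for every surface $\Sigma$ covered by the theorem (the only obstructions to asphericity are $\Sigma = S^2$ and $\Sigma = \mathbb{RP}^2$, both excluded by hypothesis), the computation of $\TC(C(\Sigma, n))$ reduces to a question about the surface braid group $\pi = B_n(\Sigma) = \pi_1(C(\Sigma, n))$. My approach is to apply the Grant--Lupton--Oprea lower bound: if $A, B \le \pi$ are subgroups satisfying $gAg^{-1} \cap B = \{e\}$ for every $g \in \pi$, then
\[\TC(K(\pi, 1)) \ge \mathrm{cd}(A) + \mathrm{cd}(B),\]
and to pair it with upper bounds coming either from the general inequality $\TC(K(\pi, 1)) \le 2\,\mathrm{cd}(\pi)$ or from explicit motion planners.

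To establish $\TC(C(\Sigma, n)) \ge 2n$ in the generic cases, I plan to construct two free abelian subgroups $A, B \le B_n(\Sigma)$ of rank $n$ each and verify the conjugation-disjoint condition above. The candidate is to pick $2n$ pairwise disjoint, pairwise non-isotopic, essential simple closed curves $\alpha_1, \ldots, \alpha_n, \beta_1, \ldots, \beta_n$ on $\Sigma$, and to let $A$ and $B$ be the subgroups generated by the $n$ commuting braids that push a single strand once along $\alpha_i$, respectively $\beta_i$, while fixing the other strands. The verification of $gAg^{-1} \cap B = \{e\}$ is the main technical step and would rest on a non-conjugacy argument inside $B_n(\Sigma)$ based on the action of braids on free isotopy classes of loops in $\Sigma$; this is precisely where the disc, the annulus and the Möbius band must be excluded, since none of them contains enough disjoint non-isotopic essential curves of the required type.

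The upper bounds split into four cases. For punctured $\Sigma$, the surface deformation retracts onto a graph and one obtains $\mathrm{cd}(B_n(\Sigma)) = n$, whence $\TC \le 2n$ by the general bound. For closed aspherical $\Sigma$, the naive dimension estimate gives only $4n$, so to reach $2n + 2$ I would exploit a decomposition $\Sigma = D \cup \Sigma^\circ$ into a small closed disc $D$ and the once-punctured surface $\Sigma^\circ$: this induces an open cover of $C(\Sigma, n)$ by pieces homotopy equivalent to configuration spaces of punctured surfaces, and the resulting sectional category estimate loses only a small additive constant compared to the punctured-case bound. For the annulus $\A$, the lower bound $2n - 1$ comes from the same GLO machinery with one fewer curve, while the matching upper bound requires an explicit motion planner exploiting the cyclic order of the strands on $S^1$ and their height coordinates. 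For the Möbius band $\M$, I would adapt the annulus construction on the orientation double cover to obtain $\TC \ge 2n - 1$, and the dimension argument of the punctured case gives $\TC \le 2n$.

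The main obstacles I foresee are (i) verifying the conjugation-disjoint condition $gAg^{-1} \cap B = \{e\}$ in each surface braid group, which needs a geometric, case-by-case argument and does not admit a single uniform proof, and (ii) the sharp upper bound $2n + 2$ for closed aspherical surfaces, since no cohomological estimate on $B_n(\Sigma)$ alone can improve on $4n$ and a genuinely geometric decomposition of $C(\Sigma, n)$ into pieces handled by the punctured-case bound appears to be required.
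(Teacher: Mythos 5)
Your high-level strategy is right (GLO lower bound paired with $\TC(\pi)\le\chd(\pi\times\pi)$ plus a motion planner for the annulus), but there are several genuine gaps where your proposed execution would fail or departs from what actually works.

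The most serious problem is your choice of subgroups for the lower bound. You propose $2n$ pairwise disjoint, pairwise non-isotopic essential simple closed curves on $\S$. This is impossible for $n$ large on a fixed surface: a pants-decomposition argument shows that a surface of genus $g$ with $p$ punctures supports at most $3g-3+p$ pairwise disjoint, pairwise non-isotopic essential curves, so the number is bounded independently of $n$. The paper instead only needs \emph{two} simple closed curves $\alpha,\alpha'$ with linearly independent classes in $H_1(\S)$; each of them has an annular neighborhood $\A$ (resp.\ $\A'$), and inside $F(\A,n)$ one takes the $n$-torus of configurations where each $x_i$ lies on one of $n$ \emph{parallel copies} of $\alpha$. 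All $n$ curves in this torus are isotopic. The rank-$n$ free abelian subgroup $Z_n\subset P_n(\A)$ comes from this torus, not from $n$ non-isotopic curves. To get these as genuine subgroups of $B_n(\S)$ you also need to know that $P_n(\A)\hookrightarrow P_n(\S)$ is injective, which is nontrivial and is exactly the content of \thmref{thm:BSinjBT}; your sketch does not address this at all.

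The non-conjugacy verification also does not need a case-by-case geometric argument on isotopy classes as you fear. The paper uses the single uniform homomorphism $P_n(\S)\to\bigoplus_{k=1}^n H_1(\S)$ obtained by forgetting all but one strand and abelianizing, and observes that conjugation by $B_n(\S)$ only permutes the $n$ summands. A non-trivial element of $Z_n$ has a summand that is a non-zero multiple of $[\alpha]$, while every element of $Z'_n$ has summands that are multiples of $[\alpha']$, and these are never related by permuting coordinates since $[\alpha]$ and $[\alpha']$ are independent. This is why only two curves are needed and why the argument is uniform across all the surfaces (with a small adjustment for the Klein bottle where one class can have order 2).

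For the upper bound on closed surfaces you say the ``naive dimension estimate gives only $4n$,'' but this misjudges the estimate. Iterating the Fadell--Neuwirth short exact sequences and using subadditivity of $\chd$ gives $\chd(P_n(\S))\le \chd(\pi_1(\S)) + (n-1)\cdot\chd(\pi_1(\text{punctured }\S)) = 2 + (n-1) = n+1$, and since $B_n(\S)$ is torsion-free with $P_n(\S)$ of finite index, $\chd(B_n(\S))\le n+1$ too. Then $\TC\le\chd(\pi\times\pi)\le 2(n+1)=2n+2$ follows immediately; no geometric decomposition of $C(\S,n)$ is required. Your proposed decomposition into punctured pieces is an interesting alternative idea but it is not how the bound is obtained, and you would still need to control the sectional category of the resulting cover.

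Finally, for the annulus lower bound of $2n-1$ you say you use ``the same machinery with one fewer curve,'' but this would only give $\chd(\Z^n\times\Z^{n-1})$ if you could even find two suitable subgroups, and the annulus has $H_1\cong\Z$, so there is no second annulus with independent $H_1$ class. The paper instead pairs the subgroup $\Z^n$ from parallel core circles with the subgroup $P_n(D)\cong P_n$ coming from a disc in $\A$ (which has $\chd(P_n)=n-1$), using that $P_n(D)$ maps to zero in $\bigoplus_k H_1(\A)$ while no non-trivial element of $Z_n$ does. Similarly, for the Möbius band the paper finds an annulus (a collar or the double of the core) directly inside $\M$ rather than passing to the orientation double cover; the double cover does not obviously produce the required subgroups of $B_n(\M)$ itself.
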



\begin{rem}
\thmref{thm:surfaces} should be compared to the corresponding results for ordered configuration spaces in \cite{CF}. They are consistent with the possibility that the values of the topological complexity of ordered and unordered configuration spaces of surfaces always agree. Note that in \cite{CF} the non-reduced version of topological complexity is used, which is one greater than the one used in this paper.
\end{rem}

The only aspherical surface not covered by \thmref{thm:surfaces} is the disc. The best estimates we found for the disc are given in the following two theorems. Note that they greatly improve over the best previously known lower bounds
\[\TC(C(D,n))\ge \cat(C(D,n))=n-1\]
coming from the Lusternik-Schnirelmann category $\cat(C(D,n))$ (see \cite{GRM}).

\begin{thm}\label{thm:disc}
If $D$ is the disc, then
\[
2n-2-\frac{n}{2}\le n-1+\chd([P_n,P_n])\le\TC(C(D,n))\le2n-2.
\]
\end{thm}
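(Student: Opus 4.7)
The proof separates into three inequalities, which I treat in turn.

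For the upper bound, $C(D,n)$ is a $K(B_n,1)$ admitting a CW-model of dimension $n-1$ (for example the Salvetti complex of the braid arrangement, or Abrams' discretised configuration space). The general estimate $\TC(X)\le 2\dim(X)$ then yields $\TC(C(D,n))\le 2n-2$.

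For the middle inequality I apply the cohomological lower bound of Grant--Lupton--Oprea~\cite{GLO}, the main tool of this paper: if $A,B\le\pi$ are subgroups of a group $\pi$ of finite cohomological dimension with $gAg^{-1}\cap B=\{1\}$ for every $g\in\pi$, then $\TC(K(\pi,1))\ge\chd(A)+\chd(B)$. Take $\pi=B_n$ (the braid group, viewed as $\pi_1(C(D,n))$) and $B=[P_n,P_n]$; since $[P_n,P_n]$ is characteristic in the normal subgroup $P_n\triangleleft B_n$, it is itself normal in $B_n$, and the transversality condition collapses to the single requirement $A\cap[P_n,P_n]=\{1\}$. A convenient choice of $A$ is the subgroup of $P_n$ generated by the full twists $\Delta_2^2,\Delta_3^2,\dots,\Delta_n^2$ on nested initial sets of strands: each $\Delta_k^2$ is central in the sub-braid group on the first $k$ strands, so they pairwise commute, and their images $\sum_{1\le i<j\le k}A_{ij}$ in $P_n^{\mathrm{ab}}\cong\mathbb{Z}^{\binom{n}{2}}$ are easily seen to be $\mathbb{Z}$-linearly independent (inspect the coordinates $(k-1,k)$). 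This forces $A\cong\mathbb{Z}^{n-1}$ and $A\cap[P_n,P_n]=\{1\}$, and produces the bound $\TC(C(D,n))\ge(n-1)+\chd([P_n,P_n])$.

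For the leftmost inequality, one rearranges to $\chd([P_n,P_n])\ge n/2-1$, and the plan is to exhibit a free abelian subgroup of $[P_n,P_n]$ of rank $\lceil n/2\rceil-1$. A natural construction uses pairwise commuting nontrivial commutators of the standard generators $A_{ij}$, arranging their index pairs so that any two commutators have either disjoint or appropriately nested support; this guarantees both nontriviality (the underlying pairs must be linked, i.e.\ cross each other) and commutativity in $P_n$. Packing $\lceil n/2\rceil-1$ such commutators into $n$ strands is tight and is the principal technical hurdle of the argument, together with a final check that the resulting commutators are $\mathbb{Z}$-linearly independent in $[P_n,P_n]$. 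An alternative route is to iterate the Fadell--Neuwirth sequence $1\to F_{n-1}\to P_n\to P_{n-1}\to 1$, using that $F_{n-1}\cap[P_n,P_n]$ contains large free subgroups and that its interaction with $[P_{n-1},P_{n-1}]$ produces, via a Lyndon--Hochschild--Serre spectral sequence argument, the desired growth of $\chd([P_n,P_n])$.
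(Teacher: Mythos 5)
Your proof divides the same way the paper's does, and two of the three parts are sound, but the leftmost inequality is left with a genuine gap.

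For the upper bound, your argument is the paper's: $\chd(B_n)=n-1$, so $C(D,n)\simeq K(B_n,1)$ has an $(n-1)$-dimensional model, and $\TC\le\cat(X\times X)\le 2\dim X$ gives $2n-2$ (the paper phrases this via \lemref{lem:upperbound}, $\TC(\pi)\le\chd(\pi\times\pi)$).

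For the middle inequality your argument is correct but your choice of $A$ differs from the paper's in an interesting way. The paper produces $\Z^{n-1}$ as the image $Z_{n-1}$ of the ``concentric circles'' torus under the identification $P_n\cong P_{n-1}(\A)$, exactly as in the annulus lower bound; you instead take the nested full twists $\Delta_2^2,\dots,\Delta_n^2$. Both give $\Z^{n-1}\le P_n$ embedding in $P_n^{\mathrm{ab}}$, and both use normality of $[P_n,P_n]$ in $B_n$ to collapse the GLO transversality condition to $A\cap[P_n,P_n]=\{1\}$. Your choice is arguably more self-contained (no detour through the annulus), while the paper's choice fits naturally alongside the annular arguments elsewhere in the paper. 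One small point of rigor: \thmref{thm:lowerbound} gives $\TC\ge\chd(A\times B)$, not $\chd(A)+\chd(B)$; you should add that since $A\cong\Z^{n-1}$ is a Poincar\'e duality group, $\chd(\Z^{n-1}\times[P_n,P_n])=n-1+\chd([P_n,P_n])$.

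For the leftmost inequality you have not given a proof. You correctly reduce to producing a free abelian subgroup of $[P_n,P_n]$ of rank about $n/2-1$, and you sketch two possible attacks (pairwise commuting, nontrivial commutators of the $A_{ij}$ with disjoint or nested supports; or a Lyndon--Hochschild--Serre argument via Fadell--Neuwirth), but you yourself flag the packing and independence checks as ``the principal technical hurdle,'' and neither attack is carried out. This is the content of the paper's \lemref{lem:comm}: using iterated little-discs embeddings one builds an injective $P_3^k\hookrightarrow P_{2k+1}$ whose $k$ factors have pairwise commuting images; picking an infinite cyclic subgroup of $[P_3,P_3]$ in each factor yields $\Z^k\le[P_{2k+1},P_{2k+1}]$, hence $\chd([P_n,P_n])\ge(n-2)/2$. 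Your ``nested support'' heuristic is in the same spirit as the little-discs nesting, but without the explicit retraction argument showing injectivity and the count $2(k-1)+3=2k+1$ of strands per nesting level, the construction is not a proof; in particular, naively packing commutators of $A_{ij}$'s with disjoint supports into $n$ strands only yields roughly $n/4$ of them, which is too few, so the nesting is not optional. The LHS alternative would also require real work: it is not clear how to extract the claimed growth of $\chd$ from $F_{n-1}\cap[P_n,P_n]$ without essentially redoing the construction.
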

Here $\chd$ is the cohomological dimension of a group and $[P_n,P_n]$ is the commutator subgroup of the pure braid
group of the disc (see section \ref{sec:braidgroups}).

We expect that $\chd([P_n,P_n])$ is in fact the maximum possible, which would mean that \thmref{thm:disc} narrows $\TC(C(D,n))$ down to two possible values.

\begin{conj}
The cohomological dimension of $[P_n,P_n]$ is equal to $n-2$.
\end{conj}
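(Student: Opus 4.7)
The plan is to establish the conjecture by showing matching upper and lower bounds, both equal to $n-2$.

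For the upper bound $\chd([P_n,P_n]) \le n-2$, I would invoke Strebel's theorem that every infinite-index subgroup of a duality group of dimension $d$ has cohomological dimension at most $d-1$. The pure braid group $P_n$ is a duality group of dimension $n-1$: iterating the Fadell--Neuwirth short exact sequence
\[
1 \to F_{n-1} \to P_n \to P_{n-1} \to 1
\]
exhibits $P_n$ as an iterated extension of finitely generated free groups, each a duality group of dimension one, and a theorem of Bieri--Eckmann guarantees that such extensions are themselves duality groups with dimensions adding. Since the abelianization $P_n^{\mathrm{ab}}\cong\mathbb{Z}^{\binom{n}{2}}$ is infinite, the commutator subgroup $[P_n,P_n]$ has infinite index in $P_n$, and the desired upper bound follows.

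For the lower bound I would argue by induction on $n$, with base cases $n=2$ and $n=3$: here $[P_2,P_2]$ is trivial and $[P_3,P_3]=[F_2,F_2]$ (using $P_3\cong F_2\times\mathbb{Z}$) is a free group of infinite rank, realizing $\chd=0$ and $\chd=1$ respectively. For the inductive step one first checks that restricting the Fadell--Neuwirth sequence to commutator subgroups yields a short exact sequence
\[
1 \to [F_{n-1},F_{n-1}] \to [P_n,P_n] \to [P_{n-1},P_{n-1}] \to 1,
\]
using that the standard generators $A_{1,n},\dots,A_{n-1,n}$ of $F_{n-1}$ project to linearly independent elements of $P_n^{\mathrm{ab}}$, so that $[P_n,P_n]\cap F_{n-1}$ is precisely $[F_{n-1},F_{n-1}]$. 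Combined with the inductive hypothesis $\chd([P_{n-1},P_{n-1}])=n-3$, the Lyndon--Hochschild--Serre spectral sequence of this extension independently recovers the upper bound $\chd([P_n,P_n])\le n-2$, and reduces the lower bound to exhibiting a surviving nonzero term in total degree $n-2$.

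The main obstacle is precisely this last step: showing that $H^{n-2}([P_n,P_n];M)\ne 0$ for some coefficient module $M$. The only candidate position in the spectral sequence is $E_2^{n-3,1}=H^{n-3}([P_{n-1},P_{n-1}];H^1([F_{n-1},F_{n-1}];M))$, and demonstrating both non-vanishing at $E_2$ and survival to $E_\infty$ requires a detailed understanding of the conjugation action of $[P_{n-1},P_{n-1}]$ on the infinite-rank abelian group $H_1([F_{n-1},F_{n-1}])$, which is a rather subtle module. An alternative strategy would be to construct by hand an explicit subgroup of $[P_n,P_n]$ of cohomological dimension $n-2$, for instance an iterated extension of free-group commutator subgroups built along the Fadell--Neuwirth tower, or a right-angled Artin subgroup on a family of pure braid generators with pairwise disjoint indices; verifying that such an embedding actually realizes the full dimension $n-2$ inside $[P_n,P_n]$ appears to be the crux of the problem, and is presumably why the statement is left as a conjecture.
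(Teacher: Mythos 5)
The statement is left as a \emph{conjecture} in the paper, so there is no proof to compare against; the only rigorous result in that direction is \lemref{lem:comm}, which gives the much weaker lower bound $\chd([P_n,P_n])\ge (n-2)/2$ by embedding a free abelian subgroup of rank roughly $n/2$ into $[P_n,P_n]$ via nested little-discs configurations. Your proposal also does not prove the conjecture, as you yourself acknowledge; the useful thing is to flag one genuine error and to assess whether the rest is on solid ground.

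The upper bound argument via Strebel's theorem is wrong as stated. Strebel's theorem concerns \emph{Poincar\'e} duality groups, not duality groups in general, and $P_n$ is not a $PD$-group for $n\ge 3$. The assertion you cite already fails in dimension one: $F_2$ is a (non-Poincar\'e) duality group of dimension $1$, the subgroup $[F_2,F_2]$ has infinite index, and yet $\chd([F_2,F_2])=1$ since it is a nontrivial free group. That said, the upper bound $\chd([P_n,P_n])\le n-2$ does hold, and your own inductive argument gives it correctly: the restricted Fadell--Neuwirth sequence
\[
1\to [F_{n-1},F_{n-1}]\to [P_n,P_n]\to [P_{n-1},P_{n-1}]\to 1
\]
is indeed exact (your verification, using that the linking-number homomorphisms restrict to a basis of $F_{n-1}^{ab}$ inside $P_n^{ab}$, is correct), and subadditivity of $\chd$ together with $\chd([F_{n-1},F_{n-1}])\le 1$ and induction gives the bound. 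You should drop the Strebel reference and present the upper bound as a consequence of this extension alone.

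For the lower bound you correctly identify the crux: the only possible contribution to $H^{n-2}$ sits in $E_2^{n-3,1}$, and showing it is nonzero and survives requires control of the $[P_{n-1},P_{n-1}]$-module $H_1([F_{n-1},F_{n-1}])$, an infinite-rank module with a complicated conjugation action. This is genuinely open. The paper avoids the spectral sequence entirely and instead exhibits explicit abelian subgroups (\lemref{lem:comm}) built from commuting copies of $[P_3,P_3]$ nested in disjoint little discs, but the subgroup constructed there only reaches rank about $n/2$. Pushing an explicit-subgroup construction up to rank $n-2$, or finding a different mechanism for the lower bound, is precisely what is missing; this is why the statement is stated as a conjecture rather than a theorem.
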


The following theorem gives a potentially better lower bound (depending on the actual value of $\chd([P_n,P_n])$, which is unknown to the authors). It also tells us that asymptotically $\TC(C(D,n))$ behaves like $2n$.

\begin{thm}\label{thm:discsqrt}
If $D$ is the disc, then
\[
2n-2\lfloor\sqrt{n/2}\rfloor-3\le\TC(C(D,n))\le 2n-2.
\]
\end{thm}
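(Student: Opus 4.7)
The upper bound $\TC(C(D,n))\le 2n-2$ is the same as in \thmref{thm:disc} and comes from the general estimate $\TC(K(G,1))\le 2\chd(G)$ applied to $G=B_n$ with $\chd(B_n)=n-1$.

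For the lower bound, the plan is to apply the Grant--Lupton--Oprea subgroup criterion \cite{GLO} directly to $B_n$, bypassing the detour through $[P_n,P_n]$ used in \thmref{thm:disc}. That criterion states that whenever $A,B\le B_n$ are subgroups with $gAg^{-1}\cap B=\{1\}$ for every $g\in B_n$, one has
\[
\TC(C(D,n))=\TC(B_n)\ge \chd(A)+\chd(B).
\]
Setting $k=\lfloor\sqrt{n/2}\rfloor$, so that $2k^2\le n$, the goal is to exhibit $A,B$ satisfying the disjointness condition with $\chd(A)+\chd(B)\ge 2n-2k-3$.

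The construction exploits the hypothesis $2k^2\le n$ to select two disjoint $k$-element subsets $S_A,S_B\subset\{1,\dots,n\}$ realised by configurations of marked points in the disc that are ``topologically separated'' in a strong sense. The naive candidates $A_0, B_0$ --- the pure braid subgroups on $\{1,\dots,n\}\setminus S_A$ and $\{1,\dots,n\}\setminus S_B$ respectively --- have $\chd=n-k-1$ each. These fail the GLO disjointness: a conjugate $gA_0g^{-1}$ behaves like the pure braid subgroup on $\{1,\dots,n\}\setminus \pi_g(S_A)$ (where $\pi_g\in S_n$ is the underlying permutation of $g$), and by an appropriate choice of $\pi_g$ its intersection with $B_0$ contains the pure braid group on the overlap $\{1,\dots,n\}\setminus(\pi_g(S_A)\cup S_B)$, which is usually large. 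To fix this one passes to refined subgroups $A\le A_0$ and $B\le B_0$ cut out by commutator conditions that depend on the specific topological placement of the sets $S_A,S_B$ in the disc rather than just on the sets themselves. Done carefully, the refinement costs at most one in total cohomological dimension, giving $\chd(A)+\chd(B)\ge 2n-2k-3$.

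The principal obstacle is the verification of $gAg^{-1}\cap B=\{1\}$ for the refined subgroups. This reduces to the combinatorial statement that no nontrivial pure braid can simultaneously satisfy the condition defining $A$ (after being conjugated by any $g\in B_n$) and the condition defining $B$. The quantitative hypothesis $2k^2\le n$ is exactly what is needed to make $S_A$ and $S_B$ combinatorially incompatible with every permutation $\pi_g$ of $\{1,\dots,n\}$: one needs enough room inside the disc to separate the two configurations so that the commutator-type data attached to one cannot be transported onto the other by any element of $B_n$. This is the technical heart of the argument, parallel in spirit to the verifications carried out in the proofs of \thmref{thm:lowergeneral} and \thmref{thm:disc}; once it is established, the GLO criterion delivers the stated lower bound.
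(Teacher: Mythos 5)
Your high-level plan — apply the GLO criterion directly to $B_n$ with two subgroups $A,B$ and choose $k=\lfloor\sqrt{n/2}\rfloor$ — matches the paper's strategy, but the proposal stops exactly where the work begins: you never define the subgroups $A$ and $B$, you never verify the disjointness condition $gAg^{-1}\cap B=\{1\}$, and you never establish the claimed cohomological dimensions. These are not details to be filled in later; they are the entire content of the lower bound.

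The construction you hint at is also genuinely different from the paper's, and it is unclear it can be made to work. You propose a symmetric pair of refined pure-braid subgroups on complements of two disjoint $k$-element subsets, refined by unspecified ``commutator conditions'' costing ``at most one'' in total $\chd$. The paper instead uses an asymmetric construction: $A\cong\Z^{n-k}$ consists of braids in which a cluster of $k$ central strands is fixed and the remaining $n-k$ strands independently orbit that cluster, so every nontrivial element of $A$ has a strand whose linking number with all $k$ central strands is nonzero; while $B$ is the image, under a little-discs map, of $[P_{m+1},P_{m+1}]\times P_k^m\times P_r$ (where $n=mk+r$), so that every strand of a braid in $B$ has nonzero linking number with at most $k-1$ other strands. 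The disjointness is then checked on the abelianization of $P_n$ by comparing these two incompatible linking patterns, which survive conjugation by $B_n$ up to permuting indices. This asymmetry is essential and has no counterpart in your symmetric $S_A,S_B$ setup.

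Finally, the paper's $\chd$ accounting relies on a separate lemma giving $\chd([P_{m+1},P_{m+1}])\ge (m-1)/2$, without which the arithmetic that produces the $-2\lfloor\sqrt{n/2}\rfloor-3$ term does not close. Your assertion that the refinement costs at most one in total cohomological dimension is unsubstantiated, and indeed the true cost against the naive $2(n-k-1)$ is controlled not by a fixed constant but by an estimate involving $m$ and the commutator subgroup. As written, the proposal records a reasonable guess about the shape of the argument but contains no proof.
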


Finally, we compute the topological complexity of the unordered configuration space of three points on the disc by finding an explicit motion planner.

\begin{thm}\label{thm:threepoints}
If $D$ is the disc, then
\[
\TC(C(D,3))=3.
\]
\end{thm}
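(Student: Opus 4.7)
I would prove both inequalities $\TC(C(D,3)) \ge 3$ and $\TC(C(D,3)) \le 3$.

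\emph{Lower bound.} This follows immediately from \thmref{thm:disc} applied with $n=3$, once we compute $\chd([P_3,P_3])=1$. The pure braid group on three strands decomposes as $P_3 \cong \mathbb{Z} \times F_2$, with the $\mathbb{Z}$ factor generated by the full twist (the generator of the center of $P_3$, which in this case coincides with a direct summand since the central extension $1\to\mathbb{Z}\to P_3\to F_2\to 1$ is classified by an element of $H^2(F_2;\mathbb{Z})=0$). Consequently $[P_3,P_3]\cong [F_2,F_2]$, which is a non-trivial free group of infinite rank, so its cohomological dimension equals $1$. Substituting into \thmref{thm:disc} gives $\TC(C(D,3)) \ge (n-1)+\chd([P_3,P_3]) = 3$.

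\emph{Upper bound.} \thmref{thm:disc} only gives $\TC(C(D,3)) \le 2n-2 = 4$, so the content of \thmref{thm:threepoints} is to improve this bound by one. I would do so by exhibiting an explicit motion planner with exactly four local sections, namely an ENR decomposition $C(D,3)\times C(D,3) = F_0 \sqcup F_1 \sqcup F_2 \sqcup F_3$ together with continuous sections $s_i\colon F_i \to C(D,3)^I$ of the free path fibration. The strategy exploits the contractibility of $D$: first, pre-compose with a canonical preparatory deformation shrinking the target configuration $B$ to a small standard cluster about a fixed interior basepoint $x_0$, thereby separating $A$ from the (modified) $B$. Then on each piece $F_i$, match the three points of $A$ to the three points of $B$ by a continuous rule (for example, sorting by angular position around each centroid, with a secondary coordinate tiebreaker) and move the matched points along straight segments, replaced by explicit detours on the locus where these segments would collide. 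The four strata would encode one generic case plus three degeneracy cases handling, respectively, ties in the ordering invariant, collinear matchings, and pairwise crossings of the straight-line matching paths.

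\emph{Main obstacle.} The delicate point is to keep the number of strata down to four: one would need to ensure that the combinatorial stratification of $C(D,3)\times C(D,3)$ induced by the canonical matching and its degeneracies has codimension at most three in an effective sense (so that four ENR pieces cover it with continuous sections). A naive case-by-case construction easily produces five or more strata, so one must carefully engineer the rules, for instance by blending the matching prescriptions across a degeneracy locus, or by choosing detour constructions that extend continuously past one degeneracy stratum into the next. Verifying that such an amalgamation is possible is feasible for $n=3$ thanks to the small combinatorial complexity ($3!=6$ matchings), but it requires explicit formulas for the four sections and a careful check of continuity at the stratum boundaries — this is where I expect the bulk of the work to lie.
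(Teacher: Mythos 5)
Your lower bound is correct and matches the paper's: both invoke \thmref{thm:disc} (equivalently \thmref{thm:lowerbound} with $A=[P_3,P_3]$, $B=Z_2$), and your explicit computation that $P_3\cong\mathbb{Z}\times F_2$, hence $[P_3,P_3]\cong[F_2,F_2]$ is free of infinite rank with $\chd=1$, is a valid (and slightly more explicit) justification of the paper's assertion $\chd([P_3,P_3]\times Z_2)=3$.

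The upper bound, however, is the entire content of \thmref{thm:threepoints} beyond what \thmref{thm:disc} already gives, and here your proposal has a genuine gap: you describe a \emph{strategy} (shrink $B$ to a cluster, match points by angular order around centroids, walk along straight segments with detours near collisions), but you do not actually produce the four ENRs $F_0,\dots,F_3$ or the sections on them, and you explicitly acknowledge that the ``bulk of the work'' --- showing the combinatorics can be amalgamated into only four pieces --- remains undone. The specific matching rule you suggest is also fragile: angular sorting around the centroid has no continuous extension across collinear configurations, the centroid-based matching jumps across $3!=6$ combinatorial sectors, and ``detours where segments collide'' introduces further degeneracy loci; it is not at all clear that this stratification closes up into four ENRs with continuous local sections, and you yourself flag this as the obstacle. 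The paper's actual construction is quite different and sidesteps these issues by exploiting the rotational $S^1$-symmetry of $C(\mathbb{C},3)$: it defines an ``orientation'' $\delta\colon C_3\to S^1$ via the normalized discriminant, splits $C_3$ into collinear configurations $L$ and non-degenerate triangles $T$, deformation retracts $L$ and $T$ onto $\delta$-preserving circles $L_R$ and $T_R$ of standard shapes, and then decomposes $C_3\times C_3$ by the two binary invariants ``cooriented or not'' and the pair of types in $\{L,T\}\times\{L,T\}$, grouping them into exactly four pieces $E_0,\dots,E_3$ each retracting onto finitely many $S^1$-orbits in $C_3\times C_3$ that visibly deform into the diagonal by rotation. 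Without an analogous structural idea, your sketch does not constitute a proof of $\TC(C(D,3))\le 3$.
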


The authors are grateful to Mark Grant for many useful discussions and comments on earlier drafts of the paper, and to Gabriele Viaggi for suggesting the strategy for the proof of \lemref{lem:technical}.







\section{Topological complexity of aspherical spaces}\label{sec:aspherical}

In this section we first define the topological complexity of a general topological space and then specialize it to aspherical spaces.

For a path-connected topological space $X$, let $p_X:X^I\to X\times X$ denote the free path fibration on $X$, with projection $p_X(\gamma) = (\gamma(0),\gamma(1))$.

\begin{defn}
The \emph{topological complexity} of $X$, denoted $\TC(X)$, is defined to be the minimal $k$ such that $X\times X$ admits a cover by $k+1$ open sets $U_0,U_1,\ldots , U_k$, on each of which there exists a local section of $p_X$ (that is, a continuous map $s_i:U_i\to X^I$ such that $p_X\circ s_i = \mathrm{incl}_i:U_i\hookrightarrow X\times X$).
\end{defn}

Note that here we use the reduced version of $\TC(X)$, which is one less than the number of open sets in the cover.

Let $\pi$ be a discrete group. It is well-known that there exists a connected CW-complex $K(\pi,1)$ with
\[\pi_i(K(\pi,1))=\left\{\begin{array}{ll} \pi & (i=1) \\ 0 & (i\ge2). \end{array}\right.\]
Such a space is called an \emph{Eilenberg--Mac Lane space} for the group $\pi$. Furthermore, $K(\pi,1)$ is unique up to homotopy. Because the topological complexity $\TC(X)$ is a homotopy invariant of the space $X$ (see \cite{Far03}) the following definition is sensible.

 \begin{defn}
 The topological complexity of a discrete group $\pi$ is given by
 \[\TC(\pi)\coloneqq\TC(K(\pi,1)).\]
 \end{defn}

In \cite{Far06} Farber posed the problem of giving an algebraic description of $\TC(\pi)$. This problem is far from being solved but some progress has been made, including the following theorem.

 \begin{thm}[{Grant--Lupton--Oprea \cite[Theorem 1.1]{GLO}}]\label{thm:lowerbound}
 Let $\pi$ be a discrete group, and let $A$ and $B$ be subgroups of $\pi$. Suppose that $gAg^{-1} \cap B = \{1\}$
 for every $g \in \pi$. Then \[\TC(\pi)\ge \chd(A \times B).\]
 \end{thm}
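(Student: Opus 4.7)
The plan is to exploit the monotonicity of sectional category under pullback, together with a careful identification of the pullback of the free path fibration along the map of classifying spaces induced by the inclusion $A\times B\hookrightarrow \pi\times\pi$. Set $X=K(\pi,1)$, so that $\TC(\pi)=\mathrm{secat}(p_X\colon X^I\to X\times X)$. Pick any map $f\colon K(A,1)\times K(B,1)\to X\times X$ realising $A\times B\hookrightarrow \pi\times\pi$ on fundamental groups, and form the pullback
\[
\begin{CD}
P @>>> X^I \\
@VVV @VV{p_X}V \\
K(A,1)\times K(B,1) @>{f}>> X\times X.
\end{CD}
\]
Because sectional category is monotone under pullback, it suffices to prove that $\mathrm{secat}(P\to K(A,1)\times K(B,1))\ge \chd(A\times B)$.

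Next, I would identify the homotopy type of $P$. The fibration $P\to K(A,1)\times K(B,1)$ has fiber $\Omega X\simeq\pi$, which is homotopically discrete, and since all the spaces in the square are aspherical so is $P$. Its path components are indexed by the orbits of the natural monodromy action of $A\times B$ on $\pi_0(\Omega X)=\pi$, which are the double cosets $A\backslash\pi/B$. By the long exact sequence of the fibration---or equivalently by the standard Mackey-style description of a homotopy pullback of classifying spaces of subgroups---the component indexed by $[g]$ is a $K(A\cap gBg^{-1},1)$. The hypothesis $gAg^{-1}\cap B=\{1\}$ for every $g\in\pi$ is equivalent to $A\cap gBg^{-1}=\{1\}$ for every $g$, so every component of $P$ is weakly contractible. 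Writing $Y=K(A,1)\times K(B,1)$ and $\widetilde Y$ for its universal cover,
\[
P\;\simeq\;\bigsqcup_{[g]\in A\backslash\pi/B}\widetilde Y.
\]

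A local section of $P\to Y$ over an open set $U$ then amounts to a lift of $U\hookrightarrow Y$ to the universal cover; since $Y$ is aspherical, such a lift exists if and only if the inclusion is null-homotopic. Hence $\mathrm{secat}(P\to Y)=\cat(Y)$, and the classical Eilenberg--Ganea-type inequality $\cat(K(G,1))\ge \chd(G)$, applied to $G=A\times B$, closes the chain:
\[
\TC(\pi)\;\ge\;\mathrm{secat}(P\to Y)\;=\;\cat\bigl(K(A\times B,1)\bigr)\;\ge\;\chd(A\times B).
\]
The main obstacle is the careful identification of the homotopy type of the pullback via the double-coset decomposition, which requires tracking the monodromy action and verifying that the hypothesis kills every $\pi_1$-contribution. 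Once this is in place, the remaining steps---passing from the sectional category of the universal cover to the Lusternik--Schnirelmann category of the base, and invoking $\cat(K(G,1))\ge \chd(G)$---are standard.
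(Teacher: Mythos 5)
The paper does not supply its own proof of this statement; it is quoted as a black box from Grant--Lupton--Oprea \cite{GLO}, so there is no internal argument to compare against. That said, your proof is correct, and it captures the essential mechanism of the original: pull back the free path fibration $p_X$ along a map $f\colon K(A,1)\times K(B,1)\to K(\pi,1)\times K(\pi,1)$ realising the inclusion $A\times B\hookrightarrow\pi\times\pi$, and use monotonicity of $\mathrm{secat}$ under pullback to reduce to bounding $\mathrm{secat}(P\to Y)$ with $Y=K(A,1)\times K(B,1)$.

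The key steps all check out. The fibre of $P\to Y$ is $\Omega K(\pi,1)$, homotopically discrete with $\pi_0\cong\pi$; the monodromy action of $(a,b)\in A\times B$ on $g\in\pi$ is $g\mapsto a^{-1}gb$, whose orbits are the double cosets $A\backslash\pi/B$ and whose stabiliser at $g$ is isomorphic to $A\cap gBg^{-1}$. The hypothesis $gAg^{-1}\cap B=\{1\}$ for all $g$ is indeed equivalent (replace $g$ by $g^{-1}$ and conjugate) to $A\cap gBg^{-1}=\{1\}$ for all $g$, so every stabiliser is trivial and $P$ is fibrewise equivalent to a disjoint union of copies of $\widetilde Y\to Y$. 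Since $Y$ is an aspherical CW complex, a local section over $U\subseteq Y$ is the same data as a lift of $U\hookrightarrow Y$ to $\widetilde Y$, and for aspherical $Y$ this exists precisely when the inclusion is null-homotopic; hence $\mathrm{secat}(P\to Y)=\cat(Y)$. Combined with $\cat(K(G,1))\ge\chd(G)$ (indeed an equality, by Eilenberg--Ganea together with Stallings--Swan, as the paper recalls), the chain of inequalities closes. The only minor caveats are implicit regularity assumptions (choose CW models for all $K(\cdot,1)$'s so that open subsets have CW homotopy type and obstruction theory applies), which are harmless and standard.
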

 
 It is worth noting that this theorem has recently been generalised using different methods in \cite[Corollary 3.5.4]{FGLO}.

The corresponding problem for the Lusternik-Schnirelmann category of a group has been completely answered: $\cat(\pi)=\chd(\pi)$. This classical result is due to Eilenberg and Ganea \cite{EG} for $\chd(\pi)\neq1$, while the remaining case follows from the later work by Stallings \cite{Sta} and Swan \cite{Swa}.

We will also need the following standard result.

\begin{lem}\label{lem:upperbound}
$\TC(\pi)\le\chd(\pi\times\pi)$.
\end{lem}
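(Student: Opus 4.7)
The plan is to bound $\TC(\pi)$ above by $\cat(\pi \times \pi)$ and then invoke the Eilenberg--Ganea--Stallings--Swan theorem (already cited in the excerpt) to identify the latter with $\chd(\pi\times\pi)$.

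First I would recall the standard inequality $\TC(X) \le \cat(X \times X)$, valid for any path-connected space $X$. This is a general property of sectional category: any sectional category of a fibration $p : E \to B$ is bounded above by $\cat(B)$, since a categorical open cover of the base consisting of sets contractible in $B$ can be lifted to one over which $p$ admits a section (using the homotopy lifting property and any choice of point-section). Applying this to the free path fibration $p_X : X^I \to X \times X$ gives the desired bound.

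Next I would specialize to $X = K(\pi,1)$. Since products of Eilenberg--Mac Lane spaces are again aspherical, we have
\[
K(\pi,1) \times K(\pi,1) \simeq K(\pi \times \pi, 1),
\]
so $\cat(K(\pi,1) \times K(\pi,1)) = \cat(\pi \times \pi)$ by definition of $\cat$ on a group.

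Finally I would invoke the classical theorem recalled in the excerpt just after \thmref{thm:lowerbound}: for any discrete group $G$, $\cat(G) = \chd(G)$, by Eilenberg--Ganea \cite{EG} together with Stallings \cite{Sta} and Swan \cite{Swa} to cover the case $\chd(G) = 1$. Applying this with $G = \pi \times \pi$ yields $\cat(\pi \times \pi) = \chd(\pi \times \pi)$, and combining with the preceding steps gives $\TC(\pi) \le \chd(\pi \times \pi)$.

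There is no serious obstacle here: the only non-trivial ingredient is the Eilenberg--Ganea--Stallings--Swan identification $\cat = \chd$ for groups, which the paper has already cited, and the inequality $\TC(X) \le \cat(X \times X)$, which is a direct consequence of the definition of topological complexity as a sectional category. The proof is essentially a two-line chain of inequalities once these two facts are assembled.
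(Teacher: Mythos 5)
Your proof is correct and follows essentially the same route as the paper: the paper's one-line proof invokes Farber's inequality $\TC(X)\le\cat(X\times X)$ and leaves implicit both the identification $K(\pi,1)\times K(\pi,1)\simeq K(\pi\times\pi,1)$ and the Eilenberg--Ganea--Stallings--Swan equality $\cat(\pi\times\pi)=\chd(\pi\times\pi)$ (which the paper has just stated a few lines earlier), all of which you spell out explicitly.
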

\begin{proof}
This follows from the upper bound $\TC(X)\le\cat(X\times X)$ given by Farber in \cite{Far03}.
\end{proof}

\section{The surface braid groups}\label{sec:braidgroups}

In this section we introduce the surface braid groups and we recall their main properties.





\begin{defn}

A surface $\S$ is a connected closed 2-dimensional manifold possibly with a finite number of points removed, called \emph{punctures}.

\end{defn}



Recall from the introduction that the configuration space $F(\S,n)$ admits an action by the symmetric group $\sym_n$ which permutes the points in each configuration. The unordered configuration space 
\[
C(\S,n)=F(\S,n)/\sym_n
\]
is by definition the orbit space of that action.

\begin{defn}
We call $P_n(\S)=\pi_1(F(\S,n))$ the \emph{pure braid group} on $n$ strands
of the surface $\S$, and $B_n(\S)=\pi_1(C(\S,n))$ the \emph{(full) braid group} on $n$ strands of $\S$.
When $\S$ is the disc $D$, we also abbreviate $P_n=P_n(D)$ and $B_n=B_n(D)$.
\end{defn}

The covering $F(\S,n)\to C(\S,n)$ yields the short exact sequence
\begin{align*}
 1\to P_n(\S) \to B_n(\S)\to \sym_n\to 1.
\end{align*}

The following theorem is due to Fadell and Neuwirth.

\begin{thm}[Fadell-Neuwirth \cite{FN}]\label{thm:FN}
Denote by $\S_{n}$ the surface obtained from $\S$ by removing $n$ points. There is a locally trivial fibration
\begin{align}\label{eq:FNfibr}
\S_{n}\to F(\S,n+1)\to F(\S,n),
\end{align}
where the projection map forgets the last point of the ordered configuration. 
\end{thm}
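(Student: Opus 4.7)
The plan is to verify that the forgetful map
\[
p \colon F(\S, n+1) \to F(\S, n), \qquad p(x_1, \ldots, x_{n+1}) = (x_1, \ldots, x_n),
\]
is a locally trivial fibration whose fiber over a point $(x_1, \ldots, x_n)$ is the open subset $\S \setminus \{x_1, \ldots, x_n\}$ of $\S$, which is homeomorphic to $\S_n$ (since removing $n$ distinct interior points from a surface gives a surface of the specified homeomorphism type, independent of the choice of the points up to an ambient homeomorphism).

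To produce local trivializations around an arbitrary configuration $\mathbf{x} = (x_1, \ldots, x_n) \in F(\S, n)$, I would first choose pairwise disjoint open disks $U_1, \ldots, U_n \subset \S$ with $x_i \in U_i$, and shrink slightly to obtain smaller concentric disks $U_i' \Subset U_i$. Setting $V = (U_1' \times \cdots \times U_n') \cap F(\S, n)$ gives an open neighborhood of $\mathbf{x}$ in $F(\S, n)$. The trivialization will be of the form
\[
\Phi \colon V \times (\S \setminus \{x_1, \ldots, x_n\}) \longrightarrow p^{-1}(V), \qquad (\mathbf{y}, z) \mapsto (\mathbf{y}, \phi_\mathbf{y}(z)),
\]
where $\phi_\mathbf{y} \colon \S \to \S$ is a self-homeomorphism depending continuously on $\mathbf{y} \in V$, equal to the identity outside $U_1 \cup \cdots \cup U_n$, and satisfying $\phi_\mathbf{y}(x_i) = y_i$ for each $i$. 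Once such a family is in hand, $\Phi$ is manifestly a homeomorphism commuting with projection to $V$, with inverse $(\mathbf{y}, w) \mapsto (\mathbf{y}, \phi_\mathbf{y}^{-1}(w))$.

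The main obstacle, and the technical heart of the proof, is constructing the continuous family $\phi_\mathbf{y}$. I would do this locally in each disk $U_i$. Fixing a coordinate chart identifying $U_i$ with an open disk in $\R^2$ and $x_i$ with the origin, pick a smooth bump function $\rho \colon U_i \to [0,1]$ with $\rho \equiv 1$ on $U_i'$ and $\rho \equiv 0$ near $\partial U_i$. For $\mathbf{y} \in V$, define $\phi_\mathbf{y}$ on $U_i$ by the rule $z \mapsto z + \rho(z)(y_i - x_i)$ (in the chosen coordinates), extended by the identity outside $\bigsqcup U_i$. Provided $U_i'$ is small enough that $|y_i - x_i|$ is dominated by the modulus of continuity of $\rho$, each such map is a diffeomorphism of $U_i$ fixing the boundary, so glues to a global self-homeomorphism of $\S$; and it depends continuously on $y_i$.

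The remaining verification is routine: $\Phi$ is continuous because $(\mathbf{y}, z) \mapsto \phi_\mathbf{y}(z)$ is, and it is a bijection onto $p^{-1}(V)$ because $\phi_\mathbf{y}$ restricts to a homeomorphism from $\S \setminus \{x_1, \ldots, x_n\}$ onto $\S \setminus \{y_1, \ldots, y_n\}$. Since the neighborhood $V$ and trivialization $\Phi$ are available around any point of $F(\S, n)$, the map $p$ is a locally trivial fibration with the claimed fiber.
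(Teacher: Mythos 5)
The paper gives no proof of this statement: it is a classical theorem of Fadell and Neuwirth, cited from \cite{FN} and used as a black box, so there is no in-paper argument to compare against. Your write-up is a correct reproduction of the standard proof. The key idea is exactly right: around each base configuration $\mathbf{x}$, one builds a continuous (in fact smooth) family of compactly supported ``push'' homeomorphisms $\phi_{\mathbf{y}}$ of $\S$, supported in disjoint coordinate disks $U_i\ni x_i$, equal to the identity near each $\partial U_i$, and carrying $x_i$ to $y_i$; then $\Phi(\mathbf{y},z)=(\mathbf{y},\phi_{\mathbf{y}}(z))$ is the desired local trivialization. Two small points of polish. First, the invertibility condition you state is slightly off in phrasing: what one needs so that $z\mapsto z+\rho(z)(y_i-x_i)$ is injective is a Lipschitz bound $\sup_{U_i}\lvert D\rho\rvert\cdot\lvert y_i-x_i\rvert<1$ (not really a ``modulus of continuity'' condition), which together with the map agreeing with the identity near $\partial U_i$ forces it to be a diffeomorphism of $U_i$. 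Second, you assert but do not argue that $\Phi^{-1}$ is continuous; this does follow, either from the inverse function theorem with parameters (the total map $(\mathbf{y},z)\mapsto(\mathbf{y},\phi_{\mathbf{y}}(z))$ is a smooth local diffeomorphism and a bijection) or from writing $\phi_{\mathbf{y}}^{-1}$ explicitly, but it deserves a sentence. Neither affects correctness; the proof is essentially complete.
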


It is well-known that the only surfaces that are not aspherical are the sphere $S^2$ and the projective plane $\R P^2$. From now on all the surfaces that we will consider are assumed to be aspherical.
The reason for this is that the methods in this paper only apply to aspherical spaces.

\begin{cor}\label{cor:FN}
Let $\S$ be an aspherical surface. From the long exact sequence of the homotopy groups applied to the Fadell-Neuwirth fibrations (\ref{eq:FNfibr}) and induction it follows that the spaces $F(\S,n)$ are also aspherical. Furthermore, we get the following short exact sequence.
\begin{align}\label{eq:FN}
 1\to \pi_1(\S_{n})\to P_{n+1}(\S)\to P_n(\S)\to 1
\end{align}
\end{cor}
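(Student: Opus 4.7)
The plan is to prove both assertions simultaneously by induction on $n$, directly invoking the Fadell–Neuwirth fibration from \thmref{thm:FN} and reading off the long exact sequence of homotopy groups. The base case $n=1$ is the observation that $F(\S,1)=\S$, which is aspherical by hypothesis.

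For the inductive step, I would fix $n\ge 1$, assume $F(\S,n)$ is aspherical, and consider the fibration
\[
\S_n\longrightarrow F(\S,n+1)\longrightarrow F(\S,n).
\]
The first key input I need is that the fiber $\S_n$ is aspherical. This holds because $\S_n$ is a connected $2$-manifold with at least one puncture (either $\S$ is already open, or $n\ge 1$ punctures have been introduced), and any non-compact connected surface without boundary deformation retracts onto a $1$-dimensional CW-complex, hence is a $K(\pi,1)$. Connectedness of $\S_n$ follows from the fact that removing finitely many points from a connected surface of dimension $\ge 2$ cannot disconnect it, so $\pi_0(\S_n)=0$.

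With $\pi_i(\S_n)=0$ and $\pi_i(F(\S,n))=0$ for all $i\ge 2$ in hand, the segment
\[
\pi_i(\S_n)\to \pi_i(F(\S,n+1))\to \pi_i(F(\S,n))
\]
of the long exact sequence forces $\pi_i(F(\S,n+1))=0$ for $i\ge 2$. Since $F(\S,n+1)$ is a CW-complex (being a manifold) with vanishing higher homotopy, it is aspherical. This completes the induction and gives the first assertion.

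For the second assertion, I extract the relevant five-term piece of the long exact sequence:
\[
\pi_2(F(\S,n))\to \pi_1(\S_n)\to \pi_1(F(\S,n+1))\to \pi_1(F(\S,n))\to \pi_0(\S_n).
\]
The left-hand term vanishes since $F(\S,n)$ is aspherical (just established), and the right-hand term vanishes by connectedness of $\S_n$. Rewriting the remaining three terms in terms of the pure braid groups yields the short exact sequence
\[
1\to \pi_1(\S_n)\to P_{n+1}(\S)\to P_n(\S)\to 1,
\]
which is exactly \eqref{eq:FN}. There is no substantive obstacle here; the only point requiring attention is the asphericity of the fiber $\S_n$, which is a standard fact about open surfaces.
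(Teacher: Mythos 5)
Your proof is correct and follows exactly the route the paper indicates (the paper gives only a one-sentence sketch, pointing to the long exact sequence of the Fadell--Neuwirth fibration and induction). The one detail you supply that the paper leaves implicit --- that the fiber $\S_n$ is aspherical because it is a connected non-compact surface, hence homotopy equivalent to a $1$-complex --- is precisely the key ingredient that makes the induction and the extraction of the short exact sequence go through.
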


We will need the following technical result, which we expect to be well-known to the experts. However, we could not find a full proof in the literature and thus we will give a detailed proof here. The result appears as Proposition 2.2 in \cite{PR} but it relies on \lemref{lem:technical} below (Proposition 2.1 in \cite{PR}), which is stated there without a proof. 




\begin{thm}
\label{thm:BSinjBT}
Let $\S\hookrightarrow\T$ be a smooth embedding of aspherical surfaces, such that the induced homomorphism $\pi_1(\S)\rightarrowtail\pi_1(\T)$ is injective.


Then the corresponding inclusion $C(\S,n)\hookrightarrow C(\T,n)$ induces an injective homomorphism $B_n(\S)\to B_n(\T)$.
\end{thm}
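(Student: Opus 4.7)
The plan is to reduce from braid groups to pure braid groups using the covering $F(\S,n)\to C(\S,n)$, and then to induct on $n$ using the naturality of the Fadell--Neuwirth fibration.

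First, the short exact sequence $1\to P_n(\S)\to B_n(\S)\to \sym_n\to 1$ is natural in $\S$, so the embedding $\S\hookrightarrow\T$ produces a commutative ladder of short exact sequences whose rightmost vertical map is the identity on $\sym_n$. By the short five lemma, it therefore suffices to prove that the induced map $P_n(\S)\to P_n(\T)$ is injective for every $n\ge 1$.

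I would prove this by induction on $n$. The base case $n=1$ reads $\pi_1(\S)\to\pi_1(\T)$ and is injective by hypothesis. For the inductive step, fix $n$ points in $\S$ and, via the embedding, regard the same set of points as lying in $\T$. The Fadell--Neuwirth fibration is natural with respect to such inclusions, so the short exact sequence (\ref{eq:FN}) fits into a commutative ladder
\[
\begin{array}{ccccccccc}
1 & \to & \pi_1(\S_n) & \to & P_{n+1}(\S) & \to & P_n(\S)  & \to & 1 \\
  &     & \downarrow  &     & \downarrow  &     & \downarrow &   &   \\
1 & \to & \pi_1(\T_n) & \to & P_{n+1}(\T) & \to & P_n(\T)  & \to & 1.
\end{array}
\]
A second application of the short five lemma reduces injectivity of the middle vertical map to injectivity of the two outer vertical maps; the right-hand one is the inductive hypothesis.

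The main obstacle is therefore the leftmost vertical map: one needs that the hypothesis ``injective on $\pi_1$'' is preserved after removing the \emph{same} finite set of interior points from both $\S$ and $\T$. This is precisely the statement of \lemref{lem:technical}, which the authors announce in the paragraph preceding the theorem and to which they attribute the strategy of proof. Once that lemma is granted, the reasoning above is purely formal diagram-chasing, and all the substantive surface topology is concentrated in the proof of \lemref{lem:technical} itself.
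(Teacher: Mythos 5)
Your proposal follows essentially the same route as the paper's proof: reduce to pure braid groups via the short exact sequence $1\to P_n\to B_n\to\sym_n\to 1$, then induct on $n$ using the naturality of the Fadell--Neuwirth short exact sequence, with the inductive step hinging on injectivity of $\pi_1(\S_{n-1})\to\pi_1(\T_{n-1})$, which the paper obtains from \lemref{lem:technical}. One small imprecision: \lemref{lem:technical} is not literally the statement that injectivity persists after removing the same points, but rather a geometric criterion (no boundary component of $\S$ bounds a disc in $\T\setminus\S$) equivalent to injectivity on $\pi_1$; the paper then observes that this criterion is clearly unchanged when the same interior balls are removed from both surfaces, which gives the stability you need.
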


In the proof of the theorem the following lemma will be essential. In that lemma a slightly different definition of non-closed surface is needed, with open balls removed instead of points removed. This is the only place in which we make use of this definition. We stress that this is not an essential distinction because the configuration spaces of punctured surfaces and the configuration spaces of surfaces with boundary are homotopy equivalent.

\begin{lem}\label{lem:technical}
Let $\S\hookrightarrow\T$ be a smooth embedding of aspherical surfaces, which we assume to be closed surfaces with (possibly) some open balls removed instead of points removed. Further assume that the image of $\S$ lies in the interior of $\T$. Then the induced homomorphism $\pi_1(\S)\rightarrow\pi_1(\T)$ is injective if and only if no boundary
component of $\S$ bounds a disc in $\T\setminus\S$.
\end{lem}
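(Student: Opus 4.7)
The plan is to prove each implication separately, first handling the degenerate case where $\S$ is itself a disc: if $\S$ is a disc, then both sides of the equivalence hold vacuously, since $\pi_1(\S)=1$ injects trivially, and any disc in $\T\setminus\S$ capping off $\partial\S$ would, together with $\S$, produce an embedded $2$-sphere in $\T$, forcing the connected aspherical $2$-manifold $\T$ to equal $S^2$ and contradicting asphericity. From now on I assume $\S$ is not a disc. By the classification of compact surfaces, $\pi_1(\S)$ is then a non-trivial free group, and a short collar-neighborhood argument shows that if a boundary circle $\gamma\subset\partial\S$ bounded an embedded disc in $\S$, this disc would be both open and closed in the connected surface $\S$, so $\S$ would be that disc, a contradiction. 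Hence every boundary circle represents a non-trivial element of $\pi_1(\S)$.

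For the \emph{only if} direction I would argue the contrapositive: if some boundary circle $\gamma$ of $\S$ bounds a disc $D$ in $\T\setminus\S$, the disc $D$ supplies a null-homotopy of $\gamma$ in $\T$, while $[\gamma]$ is non-trivial in $\pi_1(\S)$ by the remark above. Hence $[\gamma]$ lies in the kernel of $\pi_1(\S)\to\pi_1(\T)$.

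For the \emph{if} direction I would decompose $\T=\S\cup R$ with $R=\overline{\T\setminus\S}$ and connected components $R_1,\dots,R_k$, so that $\S\cap R=\partial\S$ is a disjoint union of circles. Applying the Seifert--van Kampen theorem to suitable open thickenings---equivalently, invoking Bass--Serre theory---identifies $\pi_1(\T)$ with the fundamental group of the graph of groups whose vertex groups are $\pi_1(\S)$ and the $\pi_1(R_i)$, whose edge groups are the $\pi_1(\gamma)\cong\mathbb{Z}$ for $\gamma\subset\partial\S$, and whose edge-to-vertex homomorphisms are induced by inclusions. The hypothesis translates directly into the statement that each edge-to-vertex map $\pi_1(\gamma)\to\pi_1(R_{j(\gamma)})$ is injective: by the same collar-neighborhood argument, $[\gamma]$ is trivial in $\pi_1(R_{j(\gamma)})$ if and only if $R_{j(\gamma)}$ is a disc bounded by $\gamma$, which is exactly what is forbidden. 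Combined with the injectivity of $\pi_1(\gamma)\to\pi_1(\S)$ already established, the standard developability theorem for graphs of groups guarantees that each vertex group embeds into the fundamental group of the total space; in particular $\pi_1(\S)\hookrightarrow\pi_1(\T)$.

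The main obstacle, as I see it, is the bookkeeping in the graph-of-groups decomposition, especially when some boundary circle of $\S$ is non-separating in $\T$: such a circle contributes a loop edge in the graph and produces an HNN summand in $\pi_1(\T)$ rather than an amalgam. Verifying that the open thickenings are connected with connected intersections along each boundary circle---up to a judicious choice of base points and connecting paths---so that Seifert--van Kampen applies in the required multi-edge form is the main routine-but-delicate check, and is presumably the reason Paris and Rolfsen suppress the argument in \cite{PR}.
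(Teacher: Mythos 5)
Your proof is correct, but it takes a genuinely different route from the paper's for the substantive direction. The paper proves that if $\pi_1(\S)\to\pi_1(\T)$ has a non-trivial kernel then some boundary circle of $\S$ caps off, by taking a smooth nullhomotopy $g\colon D\to\T$ of a kernel element, making $g$ transverse to $\partial\S$, choosing an outermost circle $C$ in $g^{-1}(\partial\S)$, and iterating a cut-and-paste (``disk-swap'') reduction until one boundary component $B$ of $\S$ is seen to be nullhomotopic in $\T$; torsion-freeness of $\pi_1(\T)$ and the classification of surfaces then finish it. You instead argue the ``if'' direction directly via the graph-of-groups decomposition $\T=\S\cup R_1\cup\cdots\cup R_k$ coming from cutting $\T$ along $\partial\S$: the hypothesis, together with the standard fact that a boundary circle of a compact surface is nullhomotopic only if that surface is a disc, makes every edge group $\Z\hookrightarrow\pi_1(\mathrm{vertex})$ injective, and the normal form theorem of Bass--Serre theory then gives $\pi_1(\S)\hookrightarrow\pi_1(\T)$. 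Your route is algebraically cleaner and sidesteps the outermost-circle bookkeeping entirely, at the cost of invoking Bass--Serre theory and of having to verify that the Seifert--van Kampen computation really does produce the graph of groups in question (connectedness of the thickenings, base-point paths, and a multi-piece version of van Kampen); you explicitly flag this as the delicate part, and it is. Two small remarks: in this particular decomposition the underlying graph is bipartite (each edge joins the $\S$-vertex to some $R_j$-vertex), so no loop edges and hence no HNN factors actually arise, which slightly reduces the bookkeeping you worry about; and your proof should say a word about the degenerate case where $\S$ is closed and hence $\S=\T$, which the paper dispatches at the start and which is vacuous but should be noted since your decomposition presupposes $\partial\S\neq\emptyset$.
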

\begin{proof}

Recall that we are assuming that surfaces are path-connected. Therefore, if $\S$ is closed the embedding has to be surjective and the claim is trivial. Assume $\S$ is not closed. Because the boundary components of $\S$ are smooth simple closed curves inside $\T$ they separate $\T$ into $\S$ on one side and a disjoint union of surfaces on the other side.

We first assume that the homomorphism $\pi_1(\S)\to\pi_1(\T)$ induced by the embedding is not injective and claim that
there is a disc in $\T\setminus\S$ bounded by a boundary component of $\S$.

A non-trivial element in the kernel of $\pi_1(\S)\to\pi_1(\T)$ can be represented by a smooth map $f\colon S^1\to\S$ which extends to a smooth map on the disc $g\colon D\to \T$. We may assume that the image of $f$ is in the interior of $\S$ and that $g$ is transverse to $\partial\S$.

Observe that the image of $g$ needs to have a non-empty intersection with the boundary of $\S$. Otherwise $g$ would yield a null-homotopy of $f$ inside $\S$, but by assumption $f$ represents a non-trivial class in $\pi_1(\S)$. Let $B$ be a boundary component of $\S$ which intersects the image of $g$.

The preimage of $B$ in $D$ under $g$ is now a non-empty, smooth 1-dimensional manifold. Since $f\colon S^1\to \S$ doesn't intersect
$\partial\S$, $g^{-1}(B)$ is a compact subset of the interior of $D$, hence it must be a
closed $1-$manifold.

Therefore given a path-component $C\subset D$ of $g^{-1}(B)$, we know that $C$ is a smooth circle and, by the Jordan-Schoenflies
curve theorem, $C$ bounds a disc $\tilde D$ in $D$ on one side and an annulus $A$ on
 the other side, such that $\partial A=C\cup \partial D$.
We can further assume, by choosing $C$ to be \emph{outermost} in $D$ among the path-components of $g^{-1}(B)$,
that there exists a collar neighborhood $U\supset C$ in $D$ such that $g(U\cap A)\subseteq \S$. Indeed, by transversality
we have, for a small collar neighborhood $U$, that $g(U\cap A)$ is either contained in $\S$ or in $\T\setminus \S$.
If $C$ is \emph{outermost} the former must be the case, as under this condition there is a path in $A$ from
$C$ to $\partial D$ only intersecting $g^{-1}(B)$ at the starting point, and $g(\partial D)\subset \S$.

The curve $C$ gives an element in $\pi_1(B)\simeq \Z$. If this element is trivial then we can redefine $g$ on $\tilde D$ by a nullhomotopy living on $B$. After pushing the image of $\tilde D$ along the collar neighborhood
into the interior of $\S$, we get a replacement of $g$ with (at least) one fewer connected component in $g^{-1}(\partial\S)$ than for the original map.

Hence there must exist a circle $C$ such that $g|_C$ is a non-trivial element in $\pi_1(B)$, otherwise
we would construct a nullhomotopy of $f$ inside $\S$ after finitely many iterations of the above procedure. Therefore, there is a power of the generator $[B]\in\pi_1(B)$ that vanishes in $\pi_1(\T)$. Because $\pi_1(\T)$ is torsion-free
(indeed $\T$ is a finite-dimensional classifying space for $\pi_1(\T)$), $[B]$ is already trivial in $\pi_1(\T)$.

Then $B$ is a null-homotopic simple closed curve and it must bound a disc in $\T$ by the classification of surfaces. There are two possibilities. Either this disc doesn't intersect the interior of $\S$ and it is glued to the boundary component $B$ to obtain $\T$, or $\S$ is a punctured sphere and $\T$ is obtained from $\S$ by glueing discs onto all the path-components of $\partial\S$ different from $B$ (there is at least one other boundary component because by assumption $\pi_1(\S)\to\pi_1(\T)$ is not injective and therefore $\S$ is not a disc).

We showed that if $\pi_1(\S)\to\pi_1(\T)$ is not injective, there must be a disc in $\T\setminus \S$ bounded by
boundary component of $\S$.

Conversely, assume that $\T\setminus \S$ contains a disc $D$ bounded by
some boundary component $B$ of $\partial \S$.
Then the corresponding element $[B]\in\pi_1(\S)$ vanishes in $\pi_1(\T)$. Therefore, the homomorphism $\pi_1(\S)\to\pi_1(\T)$ is not injective
unless $[B]$ is already trivial in $\pi_1(\S)$. Again by the classification of surfaces, this can only happen if $\S$ itself is a disc,
but then $\T$ would be a sphere, contradicting the hypothesis that $\T$ is aspherical.
\end{proof}

\begin{proof}[Proof of \thmref{thm:BSinjBT}]

By the commutativity of the following diagram with exact rows, it suffices to show that $P_n(\S)\to P_n(\T)$ is injective.
\begin{align*}
  \begin{CD}
 1 @>>>  P_n(\S)@>>> B_n(\S) @>>>\sym_n @>>> 1\\
 @.  @VVV @VVV @| @.\\
 1 @>>>  P_n(\T)@>>> B_n(\T) @>>>\sym_n @>>> 1\\
  \end{CD}
 \end{align*}

 We do this by induction using the Fadell-Neuwirth fibrations.
 
 For $n=1$, the homomorphism $\pi_1(\S)\to\pi_1(\T)$ is injective by assumption.
 
 Suppose now that $P_{n-1}(\S)\to P_{n-1}(\T)$ is injective. The embedding $\S\hookrightarrow \T$ gives rise to an embedding $\S_n\hookrightarrow \T_n$, in which the $n$ new punctures in $\S_n$ are sent to the $n$ new punctures in $\T_n$. The short exact sequences (\ref{eq:FN}) give rise to the following commutative diagram.
 \begin{align*}
  \begin{CD}
   1@>>>\pi_1(\S_{n-1})@>>> P_n(\S) @>>>P_{n-1}(\S)@>>>1\\
   @. @VVV @VVV @VVV\\
   1@>>>\pi_1(\T_{n-1})@>>> P_n(\T) @>>>P_{n-1}(\T)@>>>1 
  \end{CD}
 \end{align*}
The rows are exact and we assumed the vertical homomorphism on the right is injective. If the vertical homomorphism on the left were also injective, the vertical homomorphism in the middle would have to be injective, which would complete the induction argument.

It is not hard to see that the configuration spaces of punctured surfaces (points removed) and the configuration spaces of surfaces with boundary components (open balls removed) are homotopy equivalent. Because of this we might assume that $\S$ and $\T$ are surfaces with boundary and that $\S_{n-1}$ and $\T_{n-1}$ are the surfaces which result from removing $n-1$ open balls, in order to be able to use \lemref{lem:technical}. Then the embedding $\S\hookrightarrow\T$ satisfies the assumptions of \lemref{lem:technical} if and only if $\S_{n-1}\hookrightarrow\T_{n-1}$ satisfies them. Therefore, the injectivity of the leftmost vertical homomorphism is equivalent to the injectivity of $\pi_1(\S)\rightarrowtail\pi_1(\T)$, which is part of the assumptions.
\end{proof}

\section{Lower bounds}\label{sec:lower}

\begin{thm}\label{thm:lowergeneral}
Let $\mathcal{S}$ be an aspherical surface which is not the disc, the annulus or the M\"obius band. Then

$$
\TC(C(\mathcal{S},n))\ge2n.
$$
\end{thm}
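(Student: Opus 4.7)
The plan is to apply \thmref{thm:lowerbound} to the group $\pi = B_n(\S)$, choosing $A, B \le \pi$ as the images of the braid groups $B_n(\S_1), B_n(\S_2)$ of two disjoint non-disc subsurfaces $\S_1, \S_2 \subseteq \S$. By \thmref{thm:BSinjBT}, each inclusion $B_n(\S_i) \hookrightarrow B_n(\S)$ is injective as long as $\S_i$ is $\pi_1$-injectively embedded, and after adjusting basepoints via a connecting path we view both as subgroups of a common $\pi_1(C(\S,n), x_0)$. It then suffices to show that $\chd(A\times B)\ge 2n$ and that $gAg^{-1}\cap B=\{1\}$ for every $g\in B_n(\S)$.

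For the first task I would exhibit $\S_1, \S_2$ as two disjoint essential annuli in $\S$ whose core curves represent non-conjugate primitive elements of $\pi_1(\S)$. The hypothesis on $\S$ provides enough room: for a punctured $\S$ of negative Euler characteristic one separates neighbourhoods of two simple closed curves encircling different sets of punctures, and for a closed aspherical surface of genus at least $2$ (resp.\ non-orientable genus at least $3$) one uses two disjoint non-isotopic essential curves; the torus and Klein bottle are borderline cases where disjoint essential simple closed curves must be parallel and so will need a separate argument, perhaps exploiting the $\mathbb{Z}^2$ structure of $\pi_1(T^2)$ directly. Once the $\S_i$ are chosen, each has free fundamental group, so iterating the Fadell--Neuwirth fibration \thmref{thm:FN} yields $\chd(B_n(\S_i))=n$ (the upper bound from the fibration is realised, e.g.\ with $\mathbb{F}_2$-coefficients), and a K\"unneth computation gives $\chd(A\times B)=2n$.

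The principal difficulty is the conjugacy-disjointness $gAg^{-1}\cap B=\{1\}$. The intuition is that a nontrivial braid in $A$ has all strands contained in $\S_1$, so its conjugate by $g\in B_n(\S)$ has strands in an isotopic copy of $\S_1$; for this copy to fit into $\S_2$ (as would be required if the conjugate also lay in $B$) would force a conjugate of $\pi_1(\S_1)$ to meet $\pi_1(\S_2)$ non-trivially inside $\pi_1(\S)$, contradicting the non-conjugacy of the cyclic subgroups chosen in the first step. Turning this ``support'' heuristic into an algebraic argument is the main technical hurdle and likely requires iterated use of \thmref{thm:BSinjBT} to pull the subsurface enclosing a given braid down to a canonical position. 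Once the three ingredients are in place, \thmref{thm:lowerbound} immediately yields $\TC(C(\S,n))\ge 2n$.
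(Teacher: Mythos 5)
Your outline gets the general shape right (apply \thmref{thm:lowerbound} with $A,B$ living inside braid groups of embedded subsurfaces, injectivity via \thmref{thm:BSinjBT}, $\chd(A\times B)=2n$), but the specific choice of $A$ and $B$ is fatally flawed: with $A$ and $B$ the full (images of) $B_n(\S_1)$ and $B_n(\S_2)$, the condition $gAg^{-1}\cap B=\{1\}$ simply fails. A nontrivial braid $\sigma\in B_n(\S_1)$ supported in a small disc (e.g.\ a single half-twist of two nearby points) is freely homotopic to an identical-looking braid supported in a small disc inside $\S_2$; free homotopy of loops is conjugation in $\pi_1$, so some conjugate $g\sigma g^{-1}$ lies in $B_n(\S_2)$. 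Thus \emph{no} pair of disjoint subsurfaces can give conjugacy-disjoint copies of the full braid groups, and your ``support heuristic'' (a braid's support subsurface can be pulled to a canonical position) proves exactly the opposite of what you want: it shows that disc-supported braids are ubiquitous up to conjugacy. This is not a fixable technical hurdle; the subgroups must be chosen much smaller.

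The paper's solution is to take $A=Z_n$ and $B=Z_n'$ to be the $\mathbb{Z}^n$ subgroups of \emph{concentric-circle} braids inside $P_n(\A)$ and $P_n(\A')$, where $\A,\A'$ are annular neighborhoods of two simple closed curves with linearly independent classes in $H_1(\S)$. The key point is that the homomorphism $P_n(\S)\to\bigoplus_{k=1}^n H_1(\S)$ (forget all strands but one, then abelianize) sends \emph{every} nontrivial element of $Z_n$ to a tuple containing a nonzero multiple of $[\alpha]$, and every element of $Z_n'$ to a tuple of multiples of $[\alpha']$, while conjugation by $B_n(\S)$ merely permutes the $n$ summands. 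Linear independence of $[\alpha]$ and $[\alpha']$ then forces $gZ_ng^{-1}\cap Z_n'=\{1\}$. This is precisely why the disc-supported braids do not cause trouble: they lie in the kernel of that homology map and are not in $Z_n$. Two further corrections: the paper does \emph{not} require the annuli to be disjoint (they cannot be disjoint on the torus, where every pair of disjoint essential simple closed curves is parallel --- yet the torus is handled by the same argument since one only needs linearly independent homology classes); and your claim that the torus and Klein bottle ``need a separate argument'' is only half right --- the Klein bottle needs a small adjustment because $H_1$ has torsion, but the torus does not.
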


\begin{proof}
Let $\mathcal{S}$ be a surface satisfying the assumptions in the theorem. Then, with the only exception of the Klein bottle, we have $\rk(H_1(\mathcal{S}))\geq 2$ and there are two smooth simple closed curves $\alpha$ and $\alpha'$ on $\mathcal{S}$ representing linearly independent classes of $H_1(\mathcal{S})$. We may assume that there exist tubular neighborhoods
$\A$ of $\alpha$ and $\A'$ of $\alpha'$ that are annuli. If the tubular neighborhood of $\alpha$ were a M\"obius band, then we could replace $\alpha$ by the boundary of this M\"obius band.

The homomorphism $\pi_1(\A)\to \pi_1(\mathcal{S})$ is injective,
as can be checked by further projecting to $H_1(\mathcal{S})$. Similarly the homomorphism $\pi_1(\A')\to\pi_1(\mathcal{S})$ is injective.

For the Klein bottle $\K$, recall that the fundamental group $\pi_1(\K)$ has a presentation
\[
 \pi_1(\K)=\left< a,b\,|\,aba^{-1}b\right>,
\]
where both $a$ and $b$ are represented by simple closed curves $\alpha$ and $\beta$ in $\K$. Both subgroups $\left<a\right>$
and $\left<b\right>$ are
infinite cyclic, and therefore the inclusions of collar neighborhoods $\A$ of $\alpha$ and $\A'$ of $\beta$ in $\K$ are
injective at the level of $\pi_1$; the collar neighborhood of $\alpha$ is a M\"obius band so we replace $\alpha$ with its
\emph{double} as above.

Hence by \thmref{thm:BSinjBT} the homomorphisms $P_n(\A)\to P_n(\mathcal{S})$ and $P_n(\A')\to P_n(\mathcal{S})$ are injective.


We now construct a subgroup $Z_n\subset P_n(\A)$. Consider $n$ parallel, disjoint copies $\alpha_1,
\dots,\alpha_n$ of 
the curve $\alpha$ inside $\A$, and let $\mathfrak{T}\subset F(\A,n)$ be the subspace of ordered
configurations $(x_1,\dots,x_n)$ with $x_i$ lying on the curve $\alpha_i$ for all $1\leq i\leq n$;
then $\mathfrak{T}$ is an embedded $n$-fold torus in $F(\A,n)$, and at the level of fundamental groups we have
a map $\Z^n\simeq\pi_1(\mathfrak{T})\to P_n(\A)$.

This map is injective: indeed the composition
\[
 \Z^n\simeq\pi_1(\mathfrak{T})\to P_n(\A)=\pi_1(F(\A,n))\to \pi_1(\A^n)\simeq\Z^n
\]
is an isomorphism. We call $Z_n\simeq\Z^n\subset P_n(\A)$ the image of this map.

In the same way we construct an $n-$fold torus $\mathfrak{T}'\subset F(\A',n)$ and get a subgroup
$Z'_n\subset P_n(\A')$ as the image of the map between fundamental groups induced by the inclusion,
with $Z'_n\simeq\Z^n$.

\begin{figure}[htb!]
  \centering
  \includegraphics[scale=0.7]{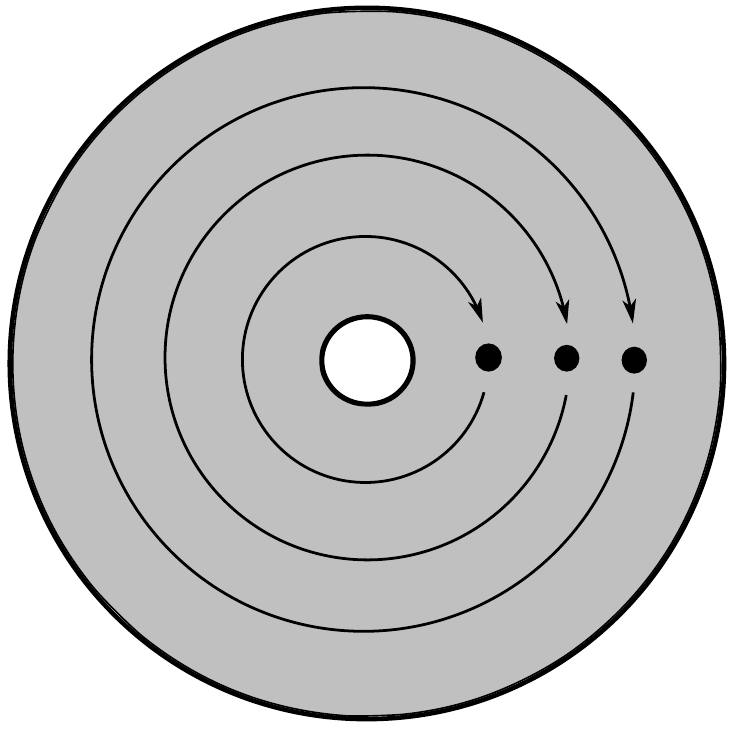}
  \caption{Braids from $Z_n$ as seen from above.}
  \label{fig:concentric}
\end{figure}


There is a homomorphism

\begin{align}\label{eq:fakeab}
P_n(\mathcal{S})\to\prod_{k=1}^n\pi_1(\mathcal{S})\to\bigoplus_{k=1}^nH_1(\mathcal{S}),
\end{align}

under which non-trivial elements in the image of $Z_n$ and $Z'_n$ inside $P_n(\mathcal{S})$
are mapped to elements which lie in different orbits under the action which permutes
the summands in $\bigoplus_{k=1}^nH_1(\mathcal{S})$. This is because the image of each non-trivial
element in $Z_n$ will have at least one summand corresponding to a non-trivial multiple
of the class in $H_1(\mathcal{S})$ represented by the curve $\alpha$, whereas the image of each braid
in $Z_n'$ has only summands corresponding to multiples of the class represented by the curve $\alpha'$.
Notice that for the Klein bottle it suffices that the homology class represented by $\alpha$ is infinite cyclic,
and the argument works even if the homology class represented by $\alpha'$ has order 2.

Now we observe that conjugating an element of $P_n(\mathcal{S})$ by an element of $B_n(\mathcal{S})$ has
the effect of permuting the summands in $\bigoplus_{k=1}^nH_1(\mathcal{S})$ under the homomorphism (\ref{eq:fakeab}). To see this first note that the homomorphism (\ref{eq:fakeab}) consists of a sum of compositions of homomorphisms of the form \[P_n(\mathcal{S})\to \pi_1(\mathcal{S})\to H_1(\mathcal{S})\] given by forgetting all strands but one and then taking the abelianization. Given a braid $\gamma\in B_n(\mathcal{S})$, we can write $\gamma=\delta\epsilon$, where $\epsilon$ is supported on a disc and $\delta\in P_n(\S)$. Therefore, conjugating by $\gamma$ reduces to conjugating by $\epsilon$ and $\delta$. Conjugating by $\epsilon$ permutes the order of the strands by the corresponding permutation under the canonical map $B_n \to \sym_n$. Conjugating by $\delta$ results in a conjugation inside $\pi_1(\mathcal{S})$ under the first homomorphism $P_n(\mathcal{S})\to \pi_1(\mathcal{S})$, but this has no effect on the abelianization.

Therefore, no non-trivial element of $Z_n$ is conjugate to an element of $Z_n'$ in $B_n(\mathcal{S})$. By \thmref{thm:lowerbound} this implies the lower bound $\TC(B_n(\mathcal{S}))\ge\chd(Z_n\times Z'_n)=2n$.
\end{proof}

\begin{thm}\label{thm:lowerannulus}
Let $\mathcal{S}$ be either the annulus or the M\"obius band. Then

$$
\TC(C(\mathcal{S},n))\ge2n-1.
$$
\end{thm}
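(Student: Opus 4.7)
The strategy is to apply \thmref{thm:lowerbound} to $\pi=B_n(\mathcal{S})$ in the same spirit as the proof of \thmref{thm:lowergeneral}, but with a different choice for the second subgroup. Since $H_1(\mathcal{S})\cong\Z$ has rank one, there is no room for a second $\Z^n$-subgroup of rank $n$, so I would trade rank for support in a disjoint disc and settle for cohomological dimension $n-1$ on the second factor.

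First I would reuse the construction of $Z_n$ from the previous proof. If $\mathcal{S}=\A$ take $\alpha$ to be the core circle; if $\mathcal{S}=\M$ take $\alpha$ to be the boundary (equivalently the double of the core). In either case $\alpha$ has an annular tubular neighborhood $\mathcal{A}_0\subset\mathcal{S}$ whose inclusion is $\pi_1$-injective. Running the construction of \thmref{thm:lowergeneral} with $n$ parallel copies of $\alpha$ inside $\mathcal{A}_0$ produces $Z_n\cong\Z^n\subset P_n(\mathcal{A}_0)\hookrightarrow P_n(\mathcal{S})\hookrightarrow B_n(\mathcal{S})$, with $\chd(Z_n)=n$. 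Next pick a small embedded open disc $D\subset\mathcal{S}$ disjoint from $\mathcal{A}_0$. Since $\pi_1(D)$ is trivial, \thmref{thm:BSinjBT} applies and identifies $B_n(D)$ with a subgroup of $B_n(\mathcal{S})$; inside it I single out $B=P_n$, the pure braid group of the disc. The iterated Fadell--Neuwirth fibrations show that $F(D,n)$ is aspherical with the homotopy type of a CW complex of dimension $n-1$, hence $\chd(P_n)=n-1$.

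It remains to verify that $gZ_ng^{-1}\cap P_n=\{1\}$ for every $g\in B_n(\mathcal{S})$; \thmref{thm:lowerbound} will then yield
\[\TC(B_n(\mathcal{S}))\ge\chd(Z_n\times P_n)=n+(n-1)=2n-1.\]
For this I would reuse the homomorphism (\ref{eq:fakeab})
\[P_n(\mathcal{S})\longrightarrow\bigoplus_{k=1}^nH_1(\mathcal{S})\cong\Z^n,\]
which in the proof of \thmref{thm:lowergeneral} was shown to be equivariant under conjugation by $B_n(\mathcal{S})$, with the action on the target being by permutation of the $n$ summands. Each generator of $Z_n$ maps to a tuple with one coordinate equal to the non-trivial class $[\alpha]$ and all others zero, so $Z_n$ maps injectively and every non-trivial element of $Z_n$ has at least one non-zero coordinate; the permutation action preserves this. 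On the other hand the image of $P_n$ is zero, because pure braids supported in a disc have trivial winding number on every strand. Hence no non-trivial conjugate of an element of $Z_n$ can lie in $P_n$.

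The main conceptual step is identifying a second subgroup that survives the rank-$1$ obstruction. The pure braid group of a disjoint disc is the natural candidate: it is large enough to realise $\chd=n-1$, and the vanishing of its winding numbers is precisely what decouples it from $Z_n$ under arbitrary conjugation. This accounts for the $-1$ gap between \thmref{thm:lowergeneral} and \thmref{thm:lowerannulus}.
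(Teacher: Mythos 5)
Your proposal is correct and follows essentially the same route as the paper: both use the $\Z^n$-subgroup $Z_n$ supported on parallel copies of the core (or boundary) circle, take the pure braid group $P_n$ of an embedded disc as the second subgroup, and verify the conjugate-disjointness hypothesis of \thmref{thm:lowerbound} via the homomorphism (\ref{eq:fakeab}) to $\bigoplus_{k=1}^n H_1(\mathcal{S})$, observing that $Z_n$ maps with trivial kernel while $P_n$ maps to zero and that conjugation only permutes coordinates.
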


\begin{proof}

In the same way as in the previous proof we can find an annulus $\A$ inside $\S$ and a subgroup $Z_n$ in $P_n(\A)$ isomorphic to $\Z^n$. Because $\pi_1(\S)\simeq H_1(\S)\simeq\Z$, this time we cannot find a second annulus inducing a linearly independent homology class, not even a disjoint infinite cyclic subgroup of $\pi_1(\S)$.

However, the inclusion of a disc $D$ in $\mathcal{S}$ also induces a monomorphism $P_n(D)\to P_n(\S)$ and no non-trivial element in $P_n(D)$ is conjugate to an element of $Z_n$ inside $B_n(S)$.


Indeed, if we consider the map
\[
P_n(\mathcal{S})\to\prod_{k=1}^n\pi_1(\mathcal{S})\to\bigoplus_{k=1}^nH_1(\mathcal{S}),
\]
we see that no non-trivial element of $Z_n$ is mapped to zero, whereas all elements of $P_n(D)$ are
mapped to zero. As we saw in the proof of the previous theorem, conjugation inside $B_n(S)$ results only in a permutation of the coordinates
of the target group $\bigoplus_{k=1}^nH_1(\mathcal{S})$, and the stated properties are therefore
invariant under conjugation.

By \thmref{thm:lowerbound} we get
\[
 \TC(C(\mathcal{S},n))\ge \chd(\Z^n\times P_n(D))= 2n-1.\qedhere
\]
\end{proof}

%
%
%
%
%
%
%

\section{Upper bounds}\label{sec:upper}

\begin{thm}\label{thm:uppergeneral}

If $\mathcal{S}$ is a closed aspherical surface, then

$$
\TC(C(\mathcal{S},n))\le2n+2.
$$

If $\mathcal{S}$ is a punctured surface which is not the disc, then

$$
\TC(C(\mathcal{S},n))\le2n.
$$

\end{thm}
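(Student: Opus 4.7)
The plan is to apply \lemref{lem:upperbound}, so the task reduces to bounding $\chd(B_n(\mathcal{S}))$ from above. I would proceed in three steps: first bound $\chd(P_n(\mathcal{S}))$ from the iterated Fadell-Neuwirth group extensions; then pass from $P_n(\mathcal{S})$ to $B_n(\mathcal{S})$ using that $P_n(\mathcal{S})$ has finite index in the torsion-free $B_n(\mathcal{S})$; finally use subadditivity of $\chd$ under direct products.

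For the first step, recall the classical inequality $\chd(G)\leq \chd(N)+\chd(Q)$ for a short exact sequence $1\to N\to G\to Q\to 1$. For any aspherical surface $\mathcal{S}$ and any $k\geq 1$, the punctured surface $\mathcal{S}_k$ is homotopy equivalent to a graph, so $\pi_1(\mathcal{S}_k)$ is free and $\chd(\pi_1(\mathcal{S}_k))\leq 1$. If $\mathcal{S}$ is itself punctured, the same bound holds for $k=0$; if $\mathcal{S}$ is closed, $\chd(\pi_1(\mathcal{S}))=2$. Iterating the short exact sequences \eqref{eq:FN} supplied by \corref{cor:FN} then gives
\[
\chd(P_n(\mathcal{S}))\;\leq\;\sum_{k=0}^{n-1}\chd(\pi_1(\mathcal{S}_k))\;\leq\;\begin{cases} n & \text{if }\mathcal{S}\text{ is punctured,}\\ n+1 & \text{if }\mathcal{S}\text{ is closed.}\end{cases}
\]

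For the second step, I would invoke the fact that $B_n(\mathcal{S})$ is torsion-free whenever $\mathcal{S}$ is aspherical (a classical result in the surface braid group literature), together with the observation that $P_n(\mathcal{S})$ sits inside $B_n(\mathcal{S})$ with finite index $n!$. Serre's theorem on virtual cohomological dimension then gives $\chd(B_n(\mathcal{S}))=\chd(P_n(\mathcal{S}))$. Taking the product of finite-dimensional $K(\pi,1)$ CW models for the two factors yields $\chd(B_n(\mathcal{S})\times B_n(\mathcal{S}))\leq 2\chd(B_n(\mathcal{S}))$, and an application of \lemref{lem:upperbound} completes the proof, giving $\TC(C(\mathcal{S},n))\leq 2n$ in the punctured case and $\TC(C(\mathcal{S},n))\leq 2n+2$ in the closed case.

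The main subtle point is the dimension accounting: as a smooth manifold, $F(\mathcal{S},n)$ has real dimension $2n$ (and $C(\mathcal{S},n)$ inherits this dimension), which by itself would only give $\chd\leq 2n$ and is not sharp enough in the closed case. The gain comes from viewing $F(\mathcal{S},n)$ as an iterated tower of $K(\pi,1)$-fibrations and tracking $\chd$ through group extensions, rather than through manifold dimension. A secondary obstacle is the torsion-freeness of $B_n(\mathcal{S})$ for closed aspherical $\mathcal{S}$, which is non-trivial and needs a careful citation; without it the passage from $\chd(P_n(\mathcal{S}))$ to $\chd(B_n(\mathcal{S}))$ via Serre's theorem is not available.
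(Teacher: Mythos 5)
Your proposal is correct and follows essentially the same route as the paper: iterate the Fadell--Neuwirth extensions to bound $\chd(P_n(\mathcal{S}))$, pass to $\chd(B_n(\mathcal{S}))$ via torsion-freeness and Serre's theorem, and conclude with \lemref{lem:upperbound} and subadditivity of $\chd$ under products. One small remark: the torsion-freeness of $B_n(\mathcal{S})$ is immediate here rather than needing a delicate citation, since $C(\mathcal{S},n)$ is already a finite-dimensional aspherical CW-complex (its fundamental group therefore has finite $\chd$ and cannot contain torsion).
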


\begin{proof}
  It is  well-known that $\chd(\pi_1(\mathcal{S}))=2$ for closed aspherical surfaces and $\chd(\pi_1(\mathcal{S}))=1$
  for punctured surfaces (other than the disc).
  Using the short exact sequences (\ref{eq:FN}) of \corref{cor:FN}, together with the fact that the
  cohomological dimension is subadditive under group
  extensions, and that $\chd(B_n(\S))=\chd(P_n(\S))$ because $B_n(\S)$ is torsion-free and $P_n(\S)$ is a finite index subgroup, we see that $\chd(B_n(\S))\leq n+1$ if $\S$ is closed and $\chd(B_n(\S))\leq n$ if $\S$
  has punctures and is not the disc (the two preceding inequalities are in fact equalities, but we don't
  need that stronger statement in this proof).
  
  The upper bounds now follow from \lemref{lem:upperbound}.
\end{proof}

Next we give an upper bound for the annulus which is one better than the one given in the previous theorem (it is in fact the optimal upper bound). For the proof we will need the following well-known technical lemma.

We defined the topological complexity in terms of the number of open sets in an open cover of $X\times X$ but for sufficiently nice spaces (CW-complexes for instance) there is an equivalent characterization in terms of decompositions into disjoint \textit{Euclidean neighborhood retracts} (ENRs).

\begin{lem}[\cite{Far06}]\label{lem:enr}

Let $X$ be an ENR (for instance a finite-dimensional locally finite CW-complex). Then the topological complexity $\TC(X)$ equals the smallest integer $k$ such that there exists a decomposition $X\times X=E_0\sqcup E_1\sqcup\cdots\sqcup E_k$ into $k+1$ disjoint ENRs, on each of which there is a local section $s_i\colon E_i\to X^I$.

The existence of such a section $s_i\colon E_i\to X^I$ is equivalent to the existence of a deformation of $E_i$ into the diagonal of $X\times X$, i.e. a homotopy between the inclusion $E_i\hookrightarrow X\times X$ and a map whose image lies entirely in the diagonal.

\end{lem}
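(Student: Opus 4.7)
The plan is to split the lemma into two independent assertions: first, the equivalence between the defining open-cover characterization of $\TC(X)$ and the disjoint-ENR characterization; second, the equivalence, for a subspace $E\subset X\times X$, between admitting a local section of $p_X$ and being deformable into the diagonal.

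I would begin with the section/deformation equivalence, as it is essentially formal. Given a local section $s\colon E\to X^I$ with $p_X\circ s=\mathrm{incl}$, the rule $H_t(x,y)=(s(x,y)(t),y)$ defines a homotopy from the inclusion $E\hookrightarrow X\times X$ to the map $(x,y)\mapsto(y,y)$, whose image lies in the diagonal. Conversely, if $H_t(x,y)=(\alpha_t(x,y),\beta_t(x,y))$ is any homotopy from the inclusion to a map landing in the diagonal, one concatenates $t\mapsto\alpha_t(x,y)$ (from $x$ to $\alpha_1(x,y)=\beta_1(x,y)$) with the reverse of $t\mapsto\beta_t(x,y)$ (from $\beta_1(x,y)$ back to $y$) to obtain a local section, continuous in $(x,y)$.

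For the equivalence of the two characterizations of $\TC(X)$, the easy direction is that an open cover yields a disjoint ENR decomposition of the same cardinality: given an open cover $U_0,\dots,U_k$ of $X\times X$ with sections, one sets $E_i=U_i\setminus(U_0\cup\cdots\cup U_{i-1})$. Each $E_i$ is locally closed in the ENR $X\times X$, hence itself an ENR, and the section on $U_i$ restricts to a section on $E_i$.

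The harder direction, from a disjoint ENR decomposition $X\times X=E_0\sqcup\cdots\sqcup E_k$ with sections $s_i$ to an open cover of the same cardinality, is the main obstacle: it requires thickening each $E_i$ to an open neighborhood that still admits a section. Here I would invoke the fact that every ENR $E_i\subset X\times X$ is a neighborhood deformation retract, i.e. there exist an open $V_i\supset E_i$, a retraction $r_i\colon V_i\to E_i$, and a homotopy $G\colon V_i\times I\to X\times X$ from the inclusion $V_i\hookrightarrow X\times X$ to $\iota_i\circ r_i$ (where $\iota_i\colon E_i\hookrightarrow X\times X$), stationary on $E_i$. Writing $G(z,t)=(\alpha_t(z),\beta_t(z))$, a section on $V_i$ is obtained by concatenating three paths in $X$: the first-coordinate path $t\mapsto\alpha_t(z)$ from the first coordinate of $z$ to that of $r_i(z)$, then $s_i(r_i(z))$, then the reverse of $t\mapsto\beta_t(z)$ from the second coordinate of $r_i(z)$ back to that of $z$. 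Since the $E_i$ partition $X\times X$ and each $V_i\supset E_i$, the family $\{V_i\}$ is an open cover of cardinality $k+1$ equipped with sections, yielding $\TC(X)\le k$.
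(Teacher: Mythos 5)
Your handling of the section/deformation equivalence is correct, and your harder direction --- from a disjoint ENR decomposition to an open cover of the same size via the neighbourhood-deformation-retract property of ENRs --- is also sound; it is essentially Farber's argument from \cite{Far06}, and the NDR fact you invoke follows from the standard result that a closed ENR inside an ENR is a cofibration (each $E_i$ is locally closed in $X\times X$, hence closed in some open $W_i$, and one applies the cofibration property there). The equivalence of local sections on a subset with deformations into the diagonal is also the standard argument.

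There is, however, a genuine gap in what you call the ``easy direction.'' You set $E_i=U_i\setminus(U_0\cup\cdots\cup U_{i-1})$ and claim that since each $E_i$ is locally closed in the ENR $X\times X$, it is itself an ENR. This implication is false: a locally closed (indeed compact) subspace of an ENR need not be an ENR, because ENRs are in addition locally contractible. For instance, take $U_1=X\times X$ and $U_0=(X\times X)\setminus K$ where $K$ is a Cantor set embedded in $X\times X$; then $E_1=K$, which is compact but not an ENR. So your $E_i$ need not satisfy the lemma's hypotheses. To repair this one should not take set-theoretic differences of the $U_i$ directly, but instead first pass to a cover by closed ENRs subordinate to $\{U_i\}$: for a polyhedron (or finite-dimensional locally finite CW-complex) $X$, one can choose a triangulation of $X\times X$ subordinate to the open cover and let $F_i\subset U_i$ be the subcomplex of simplices whose minimal covering index is $i$; then $E_i=F_i\setminus(F_0\cup\cdots\cup F_{i-1})$ is open in the ENR $F_i$, hence an ENR, and the sets $E_i$ are disjoint, cover $X\times X$, and inherit sections from the $U_i$. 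With this replacement your argument goes through.
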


\begin{thm}\label{thm:upperannulus}

If $\A$ is the annulus, then

$$
\TC(C(\A,n))\le2n-1.
$$

\end{thm}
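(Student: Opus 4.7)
The plan is to apply \lemref{lem:enr} and exhibit a partition of $C(\A,n)\times C(\A,n)$ into $2n$ pairwise disjoint ENRs $E_0,\dots,E_{2n-1}$, on each of which there is an explicit deformation into the diagonal. Parametrize the annulus as $\A=S^1\times[0,1]$, so each point is written $(\theta,t)$, and for $C\in C(\A,n)$ write $\Theta(C)\subset S^1$ for the multiset of angular coordinates of its $n$ points. The stratification is governed by the combinatorial invariant
\[
k(C_0,C_1)=\#\bigl\{\text{distinct angles in }\Theta(C_0)\cup\Theta(C_1)\bigr\}\in\{1,2,\dots,2n\},
\]
and one sets $E_j=\{(C_0,C_1):k(C_0,C_1)=2n-j\}$ for $j=0,1,\dots,2n-1$. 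Each $E_j$ is the intersection of an open condition (at least $2n-j$ distinct angles) with a closed condition (at most $2n-j$ distinct angles), hence locally closed and semi-algebraic in the smooth manifold $C(\A,n)\times C(\A,n)$, and is therefore triangulable and an ENR; the $E_j$ together cover $C(\A,n)\times C(\A,n)$.

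On each $E_j$ the motion planner is built in three phases. (i) First, a continuous perturbation in the $\theta$-direction separates the coinciding angular coordinates, bringing the pair into a configuration where all $2n$ combined angles are distinct; the slack needed for this perturbation is provided by the $t$-coordinate, which keeps the points of each individual configuration distinct in $\A$ even while some of their angles momentarily coincide. (ii) Once the $2n$ combined points have distinct angles, they sit in a well-defined cyclic order around $S^1$, which canonically pairs each point of $C_0$ with a unique point of $C_1$; each such pair is then driven together along a short angular arc, applying the standard $S^1$ motion planner (which exploits the availability of both directions around the circle to avoid the antipodal obstruction one would face on a disc). (iii) Finally a linear interpolation in the $t$-coordinate completes the deformation to the diagonal.

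The main obstacle will be making phases (i) and (ii) genuinely continuous on $(C_0,C_1)\in E_j$ while keeping all moving points collision-free throughout. In particular, the pairing used in phase (ii) is discrete data and therefore must be locally constant on $E_j$, which may force a refinement of each stratum into its connected components; however, the resulting refined pieces can still be regrouped so as to be indexed by the single invariant $k$, preserving the total count of $2n$ pieces. Verifying that the coordinated angular motion in phase (ii) never produces a collision between two of the $n$ points of $C_0$ (or of $C_1$) along a given arc of $S^1$ is a delicate local check, and constitutes the principal technical content of the construction; in the annulus case this check succeeds precisely because each of the $n$ moving pairs gets its own arc of $S^1$ to traverse.
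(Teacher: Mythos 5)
Your stratification of $C(\A,n)\times C(\A,n)$ is exactly the one in the paper: the invariant $k(C_0,C_1)$ counting distinct combined angular coordinates is precisely the paper's ``degree'' $\deg(x,y)=|p_n(x)\cup p_n(y)|$, and your $E_j$ is the paper's $L_{2n-j}$. So the count of $2n$ ENR pieces and the reason each piece is an ENR (locally closed semi-algebraic subset of a manifold) agree with the paper.

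Where you diverge is the motion planner on each stratum, and here your proposal has genuine unresolved gaps rather than merely deferred technicalities. The paper never separates coincident angles; instead it iteratively \emph{merges} them, flowing the excess $\delta_i=n_{x,i}-n_{y,i}$ points clockwise to the next occupied angle (a chip-firing scheme) and proving by a short combinatorial argument that this terminates at $\delta_i\equiv 0$, after which a fiberwise linear interpolation finishes the job. Your separate--match--merge scheme needs three things that you flag but do not supply, and at least one of which is false as stated. (a) Phase (i) requires a continuous rule selecting, among several points sharing an angle, which one moves and in which direction and by how much; making this continuous up to the frontier of the stratum (where the minimal angular gap shrinks to $0$) is not automatic. (b) There is no ``canonical'' pairing coming from the cyclic order alone: for the alternating cyclic word $C_0C_1C_0C_1\cdots$ there are already two distinct non-crossing perfect matchings, so you must commit to a specific rule (e.g.\ the cyclic parenthesis-matching, where each $C_0$ is matched to the first $C_1$ for which the clockwise running count $\#C_0-\#C_1$ returns to zero) and then argue it is locally constant on $E_j$. (c) The assertion that ``each of the $n$ moving pairs gets its own arc'' is not true for the non-crossing matching: nested pairs have nested, not disjoint, clockwise arcs, so a moving $C_0$-point can overtake another $C_0$-point with the same $t$-coordinate during phase (ii). Preventing this requires either reordering the $t$-coordinates first or staggering the angular motions, neither of which appears in the sketch. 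Since (c) is exactly the collision analysis that the paper's merging algorithm avoids by construction, it is the essential content of the proof, not a routine check, and the argument as written does not establish the bound.
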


\begin{proof}

By \lemref{lem:enr} we need to find a decomposition of $C_n(\A)\times C_n(\A)$ into $2n$ disjoint ENRs which can be deformed into the diagonal. Note that such deformations can equivalently be viewed as an explicit motion planner with $2n$ different continuous rules and as such it is potentially relevant for applications.

\subsection{Decomposition of $C_n(\A)\times C_n(\A)$}

The annulus can be identified with a product $\A=S^1\times \R$ of a circle and the real line. The projection map $p\colon\A\to S^1$ induces a map 
\[
 p_n\colon C_n(\A)\to Sym_n(S^1)
\]
where the latter space is the $n$-fold symmetric power of $S^1$, defined as the quotient of $(S^1)^{\times n}$ by
the action of $\mathfrak{S}_n$ on the coordinates.


For a given pair of configurations $(x,y)\in C_n(\A)\times C_n(\A)$ we interpret $p_n(x)$ and $p_n(y)$ as finite
subsets of $S^1$, i.e. we forget the multiplicities of points in $S^1$. The cardinality $\text{deg}(x,y)=|p_n(x)\cup p_n(y)|$ of the union of those subsets will be called the \emph{degree} of the pair.

Notice that $\text{deg}(x,y)$ is at least $1$ and at most $2n$. This yields a decomposition of $C_n(\A)\times C_n(\A)$ into $2n$ disjoint subspaces $L_k=\text{deg}^{-1}(k)$ corresponding
to the different values of $\text{deg}$, see Figure \ref{fig:annulus}. Furthermore, $L_k$ is a smooth embedded manifold and in particular an ENR.

\begin{figure}[htb!]
  \centering
  \includegraphics[scale=0.9]{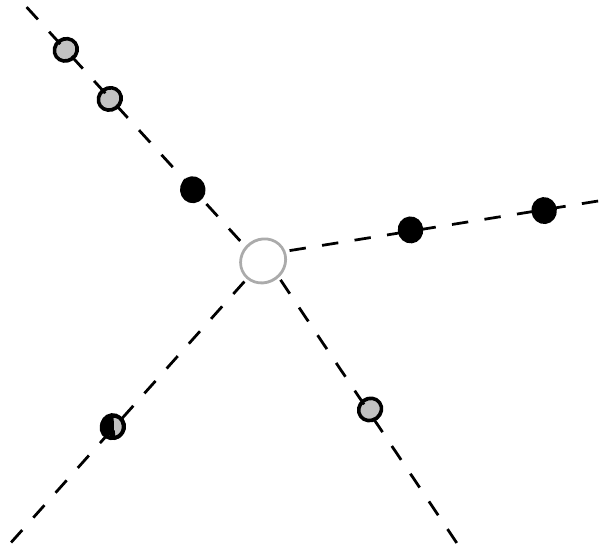}
  \captionof{figure}{A pair of configurations in $L_4$, with one
    double point.}
  \label{fig:annulus}
\end{figure}



\subsection{Local motion planners}

Given a pair $(x,y)\in L_k$, the union $p_n(x)\cup p_n(y)$ contains exactly $k$ distinct points $q_1,\dots, q_k\in S^1$, ordered cyclically on $S^1$ in clockwise direction. We need to introduce some notation. Let $n_{x,i}$ be the number of points in $x$ mapped to $q_i$ under $p$ and let $n_{y,i}$ be the number of points in $y$ mapped to $q_i$ under $p$. Finally, let $\delta_i=n_{x,i}-n_{y,i}$ be the difference between those two numbers.

The following map is continuous and well-defined.
\begin{align*}
\psi_k\colon &L_k\to\left\{(m_i)_i\in\Z^k\,|\, \sum^k_{i=1} m_i=0, \sum^k_{i=1}|m_i|\leq 2n\right\}/(12\ldots k)\\ &(x,y)\mapsto[(\delta_i)_i]
\end{align*}
Here $(12\ldots k)\in\mathfrak{S}_k$ is the \emph{long cycle}, permuting the components $m_i$.

Because the preimages of different $[(\delta_i)_i]$ are topologically disjoint, we can define the local section of the free path fibration over $L_k$ separately on each preimage.

Given a pair of configurations $(x,y)\in L_k$ lying in the preimage $\psi_k^{-1}([(\delta_i)_i])$ we need to construct a path between them, continuously over $L_k$.

If $\delta_i=0$ for all $i$ we will simply move the points of $x$ onto the points of $y$ on each fiber of $p$ by linear interpolation
inside the fibers.

On the other hand, if there exists an $i$ such that $\delta_i\neq0$, first we need to construct a path from $x$ to $\tilde x$ such that $\text{deg}(\tilde x, y)=\tilde k$ for some $\tilde k\leq k$, and such that $(\tilde x,y)\in\psi_{\tilde k}^{-1}((0)_i)$; then we concatenate this path with the fiberwise linear interpolation used above. The path from $x$ to $\tilde x$ will consist in an iteration of one particular deformation which we describe in the following and which is illustrated in Figure \ref{fig:annulus-motion}.

\subsection{First step}

Let  $(x,y)\in L_k$  as above and let $x$ consist of the points $x_{i,l}\in\A$ for $1\leq i\leq k$ and $1\leq l\leq n_{x,i}$, where for each $i$ the points $x_{i,l}$ are exactly those lying over $q_i\in S^1$ and the indices are chosen according to the order of the points on the fiber $p^{-1}(q_i)\simeq \R$.

We are going to deform $x$ into another configuration denoted $x^{(1)}$.

Whenever $\delta_i>0$, we move the $\delta_i$ top points of $x$ in $p^{-1}(q_{i})$ clockwise until they reach $p^{-1}(q_{i+1})$, on top of all points of $x$ already in $p^{-1}(q_{i+1})$ (if any). More precisely, we move the points $x_{i,l}$ for $n_{y,i}+1\leq l\leq n_{x,i}$ to $p^{-1}(q_{i+1})$
so as to keep their order and their pairwise distances, and such that $x_{i,n_{y,i}+1}$ reaches the position $1+\max\set{0,x_{i+1,n_{x,i+1}}}$ inside the fiber $p^{-1}(q_{i+1})\simeq \R$. We move these points by linear interpolation along the interval $[q_i,q_{i+1}]\subset S^1$ and along $\R$. We do this simultaneously for all $i$ for which $\delta_i>0$. Note that the indices are considered modulo $k$. This is shown in Figure \ref{fig:annulus-motion}.

It is clear from the construction that this deformation is continuous within $\psi_k^{-1}([(\delta_i)_i])$.

\begin{figure}[htb!]
  \centering
  \includegraphics[scale=0.9]{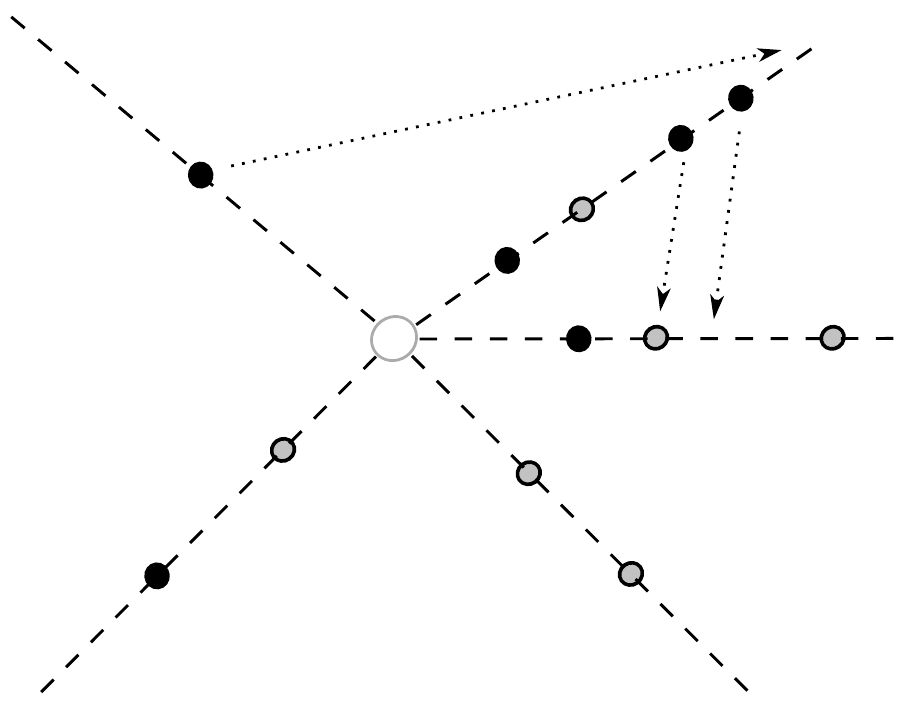}
  \captionof{figure}{One iteration of the motion planner on the
    annulus. Notice that the positions of the grey points on a given fibre are disregarded
    when moving black points towards it because the points exist in two separate spaces.}
  \label{fig:annulus-motion}
\end{figure}

\subsection{Iterations of the first step} We started with a pair of configurations $(x,y)\in L_k$ and in the previous subsection we constructed a deformation of $x$ into $x^{(1)}$. Clearly $k_1=\text{deg}(x^{(1)},y) \leq k$. We can now repeat the process starting with the pair $(x^{(1)},y)$ to get a new configuration $x^{(2)}$, again without changing $y$. Iterating this,
we get a sequence of configurations $x^{(j)}$ and a sequence of degrees $k_j=\text{deg}(x^{(j)},y)$, which is weakly
decreasing.

If this algorithm terminates after $T$ steps, then it gives us a path from $(x,y)\in L_k$ to $(x^{(T)},y)\in\psi_{\tilde k}^{-1}((0)_i)$. Furthermore, because each iteration is continuous it yields a continuous deformation of $L_k$ into $\psi_{\tilde k}^{-1}((0)_i)$, which completes the proof.

To see that the algorithm does indeed terminate, note that there exists an $N\in\N$ such that $k_j=k_N$ for all $j\geq N$. After $k_N$ further iterations we have that $\delta_{i}^{(N+k_N)}=0$ for all $1\leq i\leq k_N$ and we are done. This follows from the following three facts, which are easy to check.
\begin{enumerate}
\item For all $j> N$, if $\delta_i^{(j)}>0$, then $\delta_{i-1}^{(j-1)}>0$.
\item For all $j\geq N$, if $\delta_i^{(j)}\ge0$, then $\delta_i^{(j+1)}\ge0$.
\item For all $j\ge N$ we have $\sum_i\delta^{(j)}_i=0$.
\end{enumerate}
Indeed, if $\delta_i^{(N+k_N)}\neq0$ for some $i$ we may assume that $\delta_i^{(N+k_N)}>0$ because of (3). By (1) this would imply that $\delta_{i-l}^{(N+k_N-l)}>0$ for all $0\leq l\leq k_N-1$ and therefore by (2) $\delta_{i-l}^{(N+k_N)}\geq 0$ for all $0\leq l\leq k_N-1$.

This would mean that $\sum_{l=1}^{k_N-1}\delta^{(N+k_N)}_{i-l}>0$, which contradicts (3).
\end{proof}

\section{Proof of Theorems \ref{thm:disc} and \ref{thm:discsqrt}}\label{thmdisc}


\begin{proof}[Proof of \thmref{thm:disc}]
Notice that the pure braid group on the disc $P_n$ is isomorphic to the pure braid group $P_{n-1}(\A)$ on the annulus with one less strand in the following way. Every braid in $P_n$ can be chosen such that the last strand does not move and that strand is identified with the central hole in the annulus.

Consider the subgroup $Z_{n-1}\cong \Z^{n-1}\le P_{n-1}(\A)\cong P_n$ given by the braids in $P_{n-1}(\A)$ in which all strands move in concentric circles around the central hole.

Recall that for the abelianization $P_n^{ab}\cong \Z^{n \choose 2}$. The abelianization homomorphism is given by the collection over all unordered pairs $\{i,j\}$ of the maps $\psi_{i,j}\colon P_n\to P_2\cong \Z$ forgetting all strands except the $i$\textsuperscript{th} and the $j$\textsuperscript{th} (measuring the linking number between the strands $i$ and $j$).

Conjugating by an element $g\in B_n$ is compatible, under the abelianization, with the induced permutation
of the components $\psi_{i,j}$ of $P_n^{ab}$ coming from the canonical permutation in $\mathfrak{S}_n$ associated to $g$.

In light of the above, it is clear that the commutator subgroup $[P_n,P_n]$ (the kernel of the abelianization homomorphism) is not only normal in $P_n$ but also in $B_n$. Furthermore, it also follows that $Z_{n-1}$ is mapped injectively under the abelianization homomorphism and thus has a trivial intersection with $[P_n,P_n]$. Taken together this implies that the conjugates of a non-trivial element of $[P_n,P_n]$ cannot lie in $Z_{n-1}$.

The lower bound now follows from \thmref{thm:lowerbound} together with \lemref{lem:comm} below.

Finally, the upper bound follows from \lemref{lem:upperbound} and $\chd(B_n)=n-1$, which can be shown using the Fadell-Neuwirth fibrations as for the other aspherical surfaces.
\end{proof}

\begin{proof}[Proof of \thmref{thm:discsqrt}]

Let $(p_1,\dots, p_n)\in F(D,n)$ denote an ordered configuration of $n$ points in the disc $D$ and let $1\leq k\leq n$, to be chosen suitably later. Recall that based loops in $F(D,n)$ represent braids in the pure braid group $P_n$ and let $A\subset P_n$ consist of those pure braids represented by loops in which the points $p_1,p_2,\dots,p_k$
are fixed \emph{in the middle} and $p_{k+1},\dots,p_n$ independently rotate around this
cluster in concentric orbits. Clearly we have $A\cong\Z^{n-k}$.

We now write $n=mk+r$ for appropriate $m\geq 0$ and $1\leq r\leq k$. Notice that $r$ is assumed to be positive. 

Divide the points $p_1,\dots, p_n$ into $m$ clusters of $k$ points each plus an additional cluster of $r$ points. Let $B$ be the subgroup of $P_n$ in which points of the same cluster interact freely and such that moreover the $m+1$ clusters are allowed to move around each other,
so long as they don't mix and their trajectories describe an element in $[P_{m+1},P_{m+1}]$.

More formally, let $E_2(m+1)$ be the space
of ordered configurations of $m+1$ little discs $D_1,\dots, D_{m+1}$ inside the disc $D$. Each disc $D_i$ is uniquely determined by its
centre and its (positive) radius and the little discs are required to have disjoint interiors (see \cite{May} for an introduction
to the operad of little cubes). There is a map
\[
 E_2(m+1)\times F(D,k)\times\dots\times F(D,k)\times F(D,r)\to F(D,n)
\]
given by embedding each configuration of $k$ or $r$ points into the corresponding disc $D_i$, using
the only positive rescaling of $D$ onto $D_i$. Because $E_2(m+1)$ is also a classifying space for $P_{m+1}$, there is a homomorphism on fundamental groups
\[
\gamma\colon P_{m+1}\times P_k\times\dots \times P_k\times P_r\to P_n.
\]
To show that $\gamma$ is injective, let $\rho$ be the product of the following $m+2$ maps:
\begin{itemize}
 \item One map $P_n\to P_{m+1}$ given by forgetting all strands but a chosen one in each cluster, such that exactly
 $m+1$ strands remain.
 \item The maps $P_n\to P_k$ and $P_n\to P_r$ given by forgetting all strands outside a given cluster.
\end{itemize}
It is easy to see that $\rho$ is a retraction of $\gamma$ and that therefore
$\gamma$ is injective. The subgroup $B\subset P_n$ is defined to be the image of the restriction of $\gamma$ to $[P_{m+1},P_{m+1}]\times (P_k)^m\times P_r$.

Next we need to check that $A$ and $B$ satisfy the assumptions of \thmref{thm:lowerbound} as subgroups of $B_n$, i.e. $gAg^{-1}\cap B=\{1\}$ for all $g\in B_n$. For this we will use the abelianization of the pure braid group $P_n$. As we saw in the proof of \thmref{thm:disc}, the abelianization detects the pairwise linking numbers between the braids and conjugation by $g\in B_n$ permutes those numbers by the induced permutation.



The following property of an element $\sigma\in P_n$ is invariant under conjugation by each $g\in B_n$.

\emph{There exists an index $1\leq j\leq n$ and $k$ other indices $i_1,\dots, i_k$ such that
$\psi_{j,i_l}(\sigma)\neq 0$ for all $1\leq l\leq k$}.



Let $\alpha\in A$ be a non-trivial braid. In such a braid there is at least one point $p_j$, for $k+1\leq j\leq n$,
which rotates a non-zero number of times around the points $p_1,\ldots,p_k$. Therefore, the numbers $\psi_{l,j}(\alpha)$ are all non-zero (and equal to each other) for $1\leq l\leq k$.

However, no braid $\beta\in B$ has the property above. Indeed, $\psi_{i,j}(\beta)$ can be non-zero only if $p_i$ and $p_j$ are
in the same cluster, and every cluster contains at most $k$ points.

Hence we get that for each $1\leq k\leq n$
\[
\begin{split}
 TC(B_n) & \geq\chd(A\times B)\\
  & =n-k+m(k-1)+r-1+cd([P_{m+1},P_{m+1}])\\
  &\overset{\lemref{lem:comm}}{\geq} 2n-k-m-1+\frac {m-1}2\\
  &= 2n- k -\frac m2 -\frac 32.
 \end{split}
\]

Choosing $k=\lfloor\sqrt{n/2}\rfloor$, the inequality $$n=mk+r\geq mk+1$$ implies that $$m\leq (n-1)/k$$ and so $$m\leq \lfloor(n-1)/k\rfloor\leq 2k+4$$ by the choice of $k$. Therefore

\[
TC(B_n)\geq 2n-2\lfloor\sqrt{n/2}\rfloor -3-\frac 12
\]
and since $TC(B_n)$ is an integer we can drop the term $\frac 12$.
\end{proof}

\begin{lem}\label{lem:comm}
Let $[P_n,P_n]$ be the commutator subgroup of the pure braid group $P_n$. Then
\[
\chd([P_n,P_n])\ge\frac{n-2}{2}.
\]
\end{lem}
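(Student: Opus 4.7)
The proof is by induction on $n$, using the operadic little-discs embedding introduced in the proof of \thmref{thm:discsqrt}. The base cases are direct: $P_2\cong\Z$ is abelian, so $[P_2,P_2]$ is trivial and $\chd=0=(2-2)/2$; and by the classical splitting $P_3\cong F_2\times\Z$ (with the $\Z$-factor generated by the full twist), $[P_3,P_3]\cong[F_2,F_2]$ is a non-trivial free group, hence has $\chd=1\geq(3-2)/2$.

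For the inductive step, assume $n\geq 4$ and that the bound holds for $n-2$. I would apply the operadic construction with $m+1=3$ little discs inside $D$, with two of them each containing a single point and the third containing a cluster of $n-2$ points. Exactly as in the proof of \thmref{thm:discsqrt}, the resulting map on fundamental groups is an injective homomorphism
\[
 \gamma\colon P_3\times P_{n-2}\;\hookrightarrow\; P_n
\]
(after suppressing the two trivial $P_1$ factors), with injectivity witnessed by the explicit retraction that forgets all strands outside the chosen big cluster and keeps a single representative per cluster for the outer $P_3$. Because $\gamma$ is a group homomorphism, it sends commutators to commutators, and therefore restricts to an embedding
\[
 [P_3,P_3]\times[P_{n-2},P_{n-2}]\;\hookrightarrow\;[P_n,P_n].
\]

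Cohomological dimension is additive for direct products of torsion-free groups of finite $\chd$, so combining $\chd([P_3,P_3])=1$ with the inductive hypothesis $\chd([P_{n-2},P_{n-2}])\geq(n-4)/2$ yields
\[
 \chd([P_n,P_n])\;\geq\;1+\tfrac{n-4}{2}\;=\;\tfrac{n-2}{2},
\]
completing the induction. I expect no real obstacle here: everything rests on identifying $P_3$ as the smallest pure braid group with non-trivial commutator subgroup, and then peeling off a $P_3$-factor via the operadic embedding at each step, gaining exactly one cohomological dimension per application.
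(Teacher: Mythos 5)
Your argument is correct and is essentially the paper's proof, rephrased as an induction stepping down by two: the paper performs all the little-disc nestings at once, obtaining an injection $P_3^k\hookrightarrow P_{2k+1}$ whose images commute, and then lands directly in a free abelian subgroup $\Z^k\leq[P_{2k+1},P_{2k+1}]$, so that only monotonicity of $\chd$ under subgroups is needed. One small caution about your final step: additivity of $\chd$ for direct products is not a theorem for arbitrary torsion-free groups of finite cohomological dimension; what rescues your inductive step is that $[P_3,P_3]$ contains an infinite cyclic subgroup, and $\chd(\Z\times H)=1+\chd(H)$ whenever $\chd(H)<\infty$ (a standard fact, e.g.\ via the Lyndon--Hochschild--Serre spectral sequence) --- a point the paper's formulation sidesteps entirely by exhibiting $\Z^k$ outright.
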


\begin{proof}

Like in the previous proof, let $E_2(3)$ denote the space of ordered configurations of 3 little discs $D_1$, $D_2$ and $D_3$ inside a disc $D$. There exists a map

\[
 E_2(3)\times F(D,3)\to F(D,5),
\]
given by embedding the configurations in $F(D,3)$ into the first disc $D_1$ (after the appropriate rescaling) and by mapping the other two little discs to their centre points.

Iterating this construction $k-1$ times results in the following map.

\[
\underbrace{E_2(3)\times E_2(3)\times\dots\times E_2(3)}_{k-1}\times F(D,3)\to F(D,2k+1)
\]

On fundamental groups this yields a homomorphism

\begin{align}\label{matryoshka}
P_3^k\to P_{2k+1}.
\end{align}

Similarly to the previous proof, this homomorphism is injective. By construction the images of the different $P_3$ factors commute with each other.

Let $\Z$ be an infinite cyclic subgroup of $[P_3,P_3]$. The image of the homomorphism (\ref{matryoshka}) restricted to $\Z^k\le P_3^k$ is isomorphic to $\Z^k$ by the above observations and it is a subgroup of $[P_{2k+1},P_{2k+1}]$. By the well-known properties of cohomological dimension

\[
\chd([P_{2k+1},P_{2k+1}])\ge\chd(\Z^k)=k.
\]

This proves the claim for $n=2k+1$ odd. For $n$ even the claim immediately follows from $P_{n-1}\le P_n$.
\end{proof}

\section{Motion planner for the disc}\label{sec:discmotion}

Let $D$ be the disc. In this section we are going to give an explicit motion planner which will imply that $\TC(C(D,3))=3$ as stated in \thmref{thm:threepoints}. Observe that a motion planner on a subset of $X\times X$ is the same as a deformation into the diagonal.

\begin{proof}[Proof of \thmref{thm:threepoints}]

The lower bound follows from \thmref{thm:disc} because $$\chd([P_3,P_3]\times Z_2)=1+2=3.$$

We will work with the space $C_3=C(\C,3)\simeq C(D,3)$ for the remainder of the proof.

To show $\TC(C_3)\le3$ it suffices to find a decomposition of $C_3\times C_3$ into 4 disjoint ENRs such that each of them can be deformed to the diagonal, by \lemref{lem:enr}. 

In the next subsections we will first decompose $C_3\times C_3$ into 4 disjoint ENRs $E_0,E_1,E_2,E_3$ and discuss
some geometric properties of these; then we will describe a motion planner on each $E_i$.

\subsection{Decomposition of $C_3\times C_3$}

First we need a notion of \emph{orientation} for configurations in $C_3$. For this we define a function $\Delta\colon C_3\to \C^*$ by
\[
 \Delta(\set{z_1,z_2,z_3})=(z_1-z_2)^2(z_2-z_3)^2(z_3-z_1)^2,
\]
and let $\delta=\Delta/|\Delta|\colon C_3\to S^1$ be its normalization.

We say that two configurations $x,y\in C_3$ are \textit{cooriented}
if $\delta(x)=\delta(y)$. Let $P\subset C_3\times C_3$ denote the closed subspace of pairs $(x,y)$ for which $x$ and $y$ are cooriented and let $N=C_3\times C_3\setminus P$ denote its complement.

The Lie group $S^1$ on $C_3$ by rotations about the origin. Given a configuration $x\in C_3$ and an element $\theta\in S^1$ the
following formula holds
\begin{align}
\label{deltatheta}
 \delta(\theta\cdot x)=\theta^6\delta(x).
\end{align}

Let $L\subset C_3$ consist of those configurations for which all three points are on a line and let $T=C_3\setminus L$ be its complement. The points in a configuration in $T$ form a nondegenerate triangle; $L\subset C_3$ is closed and $T\subset C_3$ is open.

We define a deformation retraction of $L$ onto the subspace $L_R$ containing configurations of 3 aligned points, one at the origin
and two on the unit circle and opposite to each other. Note that $L_R$ is homeomorphic to a circle and is invariant under rotation. Given a configuration in $L$, we translate it such that the central point ends up at the origin and then slide the two outer points along the line which goes through all three points until they are both at distance 1 from the origin. This defines a deformation retraction
$r_L\colon L\to L_R$. The deformation preserves $\delta$, because the direction determined by any two
points in the configuration remains the same throughout the deformation.

Similarly we define a deformation retraction of $T$ onto the subspace $T_R$ containing configurations of 3 points on the unit
circle that form an equilateral triangle. Note that $T_R$ is also homeomorphic to a circle and invariant under rotation. Given a configuration in $T$, we translate it until the centre of mass coincides with the origin. Then we slide all three points simultaneously along the lines going through the origin until the points land in the unit circle. Finally we rotate the points until they are at equal distance from each other on the unit circle.

More precisely, let $X,Y,Z$ be a configuration of three points on the unit circle, appearing in this order clockwise. Consider
the lengths of the arcs $XY$, $YZ$, $ZX$. If the arcs are all of the same length, then we are done. If there is precisely one arc of minimal length, say $XY$,
then we could slide $X$ and $Y$ at the same speed along the unit circle, gradually increasing the length of $XY$ and decreasing
 both $YZ$ and $ZX$, until the length of $XY$ becomes equal to at least one of the other two arcs.  Therefore, we may assume that there are exactly two
 arcs of minimal length. In this case there is one arc, say $YZ$, which is strictly longer than the other two arcs. Slide both $Y$ and $Z$ at the same speed along the unit circle, gradually decreasing the length of $YZ$ and increasing the lengths of $XY$ and $ZX$, until all three arcs are equal. See figures \ref{angle1} and \ref{angle2}.

\vspace{.3cm}
\begin{minipage}{.48\linewidth}
  \begin{center}
    \includegraphics[scale=.9]{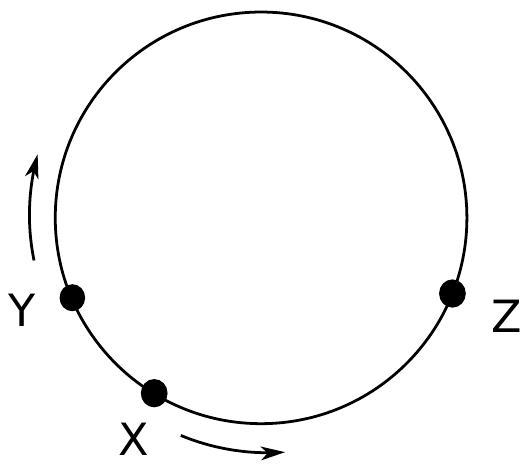}
    \captionof{figure}{First step}
    \label{angle1}
  \end{center}
\end{minipage}%
\begin{minipage}{.48\linewidth}
  \begin{center}
    \includegraphics[scale=.9]{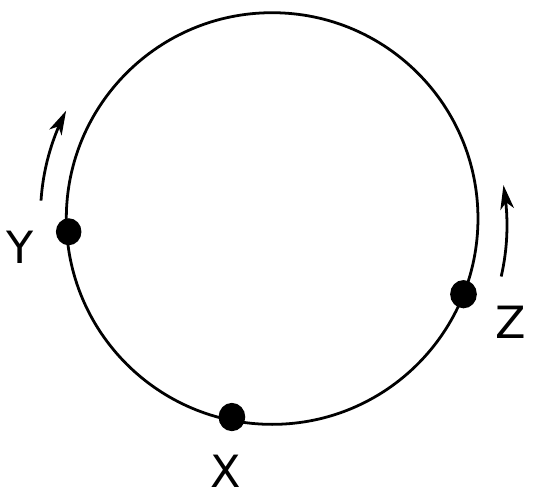}
    \captionof{figure}{Second step}
    \label{angle2}
  \end{center}
\end{minipage}
\vspace{.3cm}

Additionally, we make sure that the above deformation preserves $\delta(x)$ by constantly rotating the configuration $x$ about the origin during the whole process to compensate for the potential change of $\delta(x)$. More precisely, let $H\colon T\times[0,1]\to C_3$ be the homotopy described above, with $H(\cdot,1)\in T_R$, and consider the function $\bar\delta\colon T\times[0,1]\to S^1$
defined by
\[
 \bar\delta(x,t)=\delta(H(x,t))/\delta(x).
\]
Then $\bar\delta(\cdot,0)\colon T\to S^1$ is the constant function $1$ and it admits a lift to the universal covering
$\R\to S^1$, namely the constant function $0$. We can then extend this lift to all positive times, obtaining a map
$\tilde\delta\colon T\times[0,1]\to\R$. Let now $\tilde\rho\colon T\times[0,1]\to \R$ be given by
\[
 \tilde\rho(x,t)=\frac 16 \tilde\delta(x,t)
\]
and denote by $\rho\colon T\times[0,1]\to S^1$ its projection onto $S^1$ along the universal covering map $\R\to S^1$.

Finally consider the homotopy $\bar H\colon T\times [0,1]\to C_3$ given by
\[
 \bar H(x,t)=\left(\rho(x,t)\right)^{-1}\cdot H(x,t).
\]
Then $\bar H$ is a deformation retraction of $T$ onto $T_R$ preserving $\delta$ at all times: this follows easily
from the construction and from formula \ref{deltatheta}.

Denote
$r_T=\bar H(\cdot,1)\colon T\to T_R$.



We are now ready to construct the decomposition into disjoint ENRs as follows.


\begin{itemize}
\item $E_0=P\cap (L\times L)$
\item $E_1=N\cap (L\times L)\sqcup P\cap (T\times L \sqcup L\times T)$
\item $E_2=N\cap (T\times L \sqcup L\times T) \sqcup P\cap (T\times T)$
\item $E_3=N\cap (T\times T)$ 
\end{itemize}

Note that the subspaces $E_i$ are semialgebraic sets and therefore ENRs.

Furthermore, the disjoint unions above are topological, i.e. they form disconnected components inside each $E_i$. This follows from the fact that the disjoint components are relatively open inside each $E_i$. For example $N\cap (L\times L)$ and $P\cap (T\times L)$ are the
intersections of $E_1$ with the open sets $N$ and $T\times C_3$ respectively, and $N\cap (T\times L)$ is the intersection
of $E_2$ with the open set $N\cap (T\times C_3)$.

\subsection{Local motion planners}
We show now that each $E_i$ deformation retracts onto a disjoint union of circles.
First we notice that for $A,B\in\{L,T\}$ the intersection $N\cap (A\times B)$ can be deformed to $P\cap (A\times B)$. Given a pair $(x,y)\in N\cap (A\times B)$, rotate $x$ clockwise about the origin until $x$ and $y$ are cooriented. This can be done continuously
thanks to formula \ref{deltatheta}.

The subspaces $P\cap (L\times L)$ and $P\cap (T\times T)$ deformation
retract to $P\cap (L_R\times L_R)$ and $P\cap (T_R\times T_R)$
respectively, because the retractions $r_L$ and $r_T$ commute with
$\delta$.

The subspaces $P\cap (L_R\times L_R)$ and $P\cap (T_R\times T_R)$ in
turn consist of a disjoint union of three circles and a disjoint union
of two circles respectively, where each circle is an orbit under the
diagonal action of $S^1$ on $C_3\times C_3$. Precisely one orbit in
$P\cap (L_R\times L_R)$ and one orbit in $P\cap (T_R\times T_R)$
already lie in the diagonal of $C_3\times C_3$. The remaining orbits
consist of pairs of lines or pairs of triangles which are at a given
angle from each other ($\pi/3$ or $2\pi/3$ in the case of lines and
$\pi/3$ in the case of triangles to be precise). See figures
\ref{fig:lines} and \ref{fig:triangles}. They can be deformed into the
diagonal by rotating the first configuration in every pair clockwise
about the origin until it is equal to the second configuration in that
pair.

\begin{figure}[htb!]
  \centering
  \includegraphics[scale=0.8]{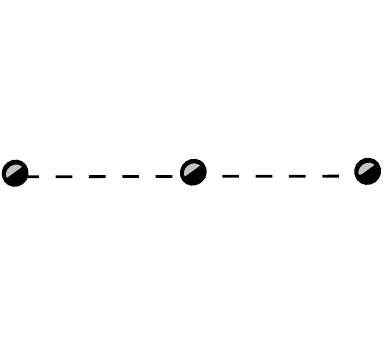}\hspace{1cm}
  \includegraphics[scale=0.8]{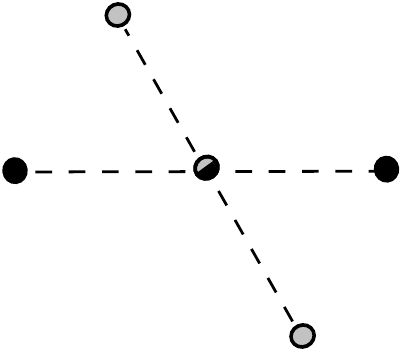}\hspace{1cm}
  \includegraphics[scale=0.8]{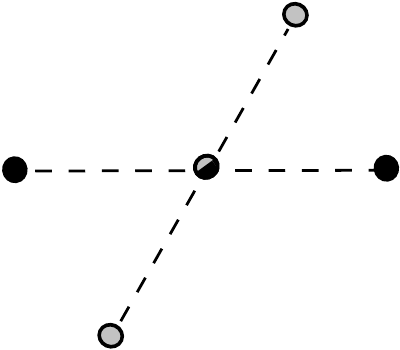}
  \captionof{figure}{Path-components of $P\cap (L_R\times L_R)$ (up to
    rotation).}
  \label{fig:lines}
\end{figure}

\begin{figure}[htb!]
  \centering
  \includegraphics[scale=0.8]{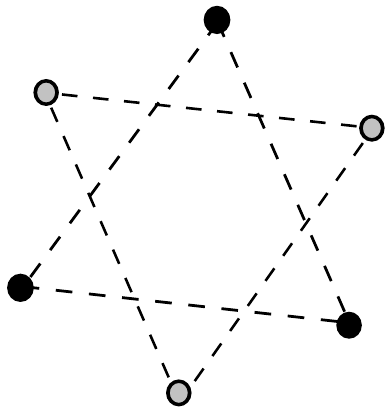}\hspace{1cm}
  \includegraphics[scale=0.8]{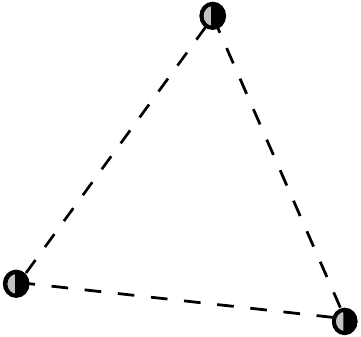}
  \captionof{figure}{Path-components of $P\cap (T_R\times T_R)$ (up to
    rotation).}
  \label{fig:triangles}
\end{figure}

Similarly the space $P\cap (L\times T)$ can be deformed to $P\cap
(L_R\times T_R)$, which consists of one single orbit under the
diagonal $S^1$-action, see Figure \ref{fig:triangle-line}. Specifically it
contains pairs of configurations $(x,y)$, where the points in $y$ form
an equilateral triangle centered at the origin and the points in $x$
lie on a line parallel to one of the sides of said triangle and are
symmetrically distributed around the origin. We move the point in $y$
opposite to the side parallel to $x$ to the origin and the other two
points in $y$ to the corresponding outer points in $x$. The pair
$(x,x)$ is obviously in the diagonal and so we are done.

\begin{figure}[htb!]
  \centering
  \includegraphics{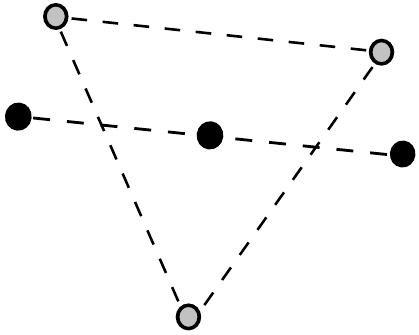}
  \captionof{figure}{The subset $P\cap (T_R\times L_R)$ (up to
    rotation).}
  \label{fig:triangle-line}
\end{figure}

This completes the proof because the deformation can be defined separately on the different disconnected components of each $E_i$.
\end{proof}

\section{Conclusions}\label{sec:conclusions}

The results in this paper can be viewed equivalently as finding the values for the topological complexity of either full braid groups of surfaces or unordered configuration spaces of surfaces, since for aspherical surfaces $\S$
\[
\TC(C(\S,n))=\TC(B_n(\S)).
\]

All the results except the ones which rely on finding explicit motion planners (or equivalently deformations into the diagonal) extend to finite index subgroups of $B_n(\S)$ with the same proofs. To be precise the results which generalize to finite index subgroups are the ones given in the theorems  \ref{thm:disc}, \ref{thm:discsqrt}, \ref{thm:lowergeneral}, \ref{thm:lowerannulus} and \ref{thm:uppergeneral}.

In particular those results apply to the pure braid groups $P_n(\S)$ and the mixed braid groups from \cite{GRM}. Observe that for aspherical surfaces $\S$ the topological complexity $\TC(P_n(\S))$ of the pure braid groups of $\S$ is the same as the topological complexity $\TC(F(\S,n))$ of the ordered configuration spaces of $\S$.

Thus the methods in this paper yield an alternative proof for some of the results given by Cohen and Farber in \cite{CF}, in particular the topological complexity of ordered configuration spaces for all non-closed orientable surfaces (for the ordered configuration spaces of the disc
one can use a slightly modified version of the proof of \ref{thm:upperannulus} to find explicit motion planners). 
Furthermore, it extends their results to all non-closed non-orientable surfaces except the M\"obius band.

It is worth noting that the results in this paper taken together with the results in \cite{CF} are consistent with the possibility that the topological complexities of the ordered and the unordered configuration spaces of a surface coincide, for all surfaces.

The only remaining aspherical surface for which the gap between the lower bound and the upper bound for the topological complexity of its unordered configuration spaces is still arbitrarily large is, perhaps surprisingly, the disc.

If it is in fact true that $\chd([P_n,P_n])=n-2$, then \thmref{thm:disc} would imply $\TC(C(D,n))\ge2n-3$. If additionally the upper bound for $n=3$ given in \thmref{thm:threepoints} generalized to higher $n$, this would completely determine $\TC(C(D,n))$. We make the following

\begin{conj}
If $D$ is the disc, then
\[
\TC(C(D,n))=\TC(B_n)=2n-3.
\]
\end{conj}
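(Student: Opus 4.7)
The conjecture splits into two essentially independent statements: the lower bound $\TC(C(D,n))\ge 2n-3$ and the matching upper bound $\TC(C(D,n))\le 2n-3$. I would attack them separately.

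For the lower bound, \thmref{thm:disc} reduces the task to proving $\chd([P_n,P_n])=n-2$, the conjecture raised earlier in the paper. Since \lemref{lem:comm} gives only $\chd([P_n,P_n])\ge (n-2)/2$ via the matryoshka embedding $P_3^k\hookrightarrow P_{2k+1}$, the plan is to build a more efficient embedding $\Z^{n-2}\hookrightarrow [P_n,P_n]$. The key inefficiency of the matryoshka construction is that each nested copy of $P_3$ contributes only a single commutator generator while spending three strands. My plan is to align $n-2$ little discs $D_2,\dots,D_{n-1}$ along a horizontal chain so that consecutive discs share exactly one strand, and to place one standard commutator of pure-braid generators inside each $D_i$. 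The operad/group-theoretic compatibility of the iterated Fadell-Neuwirth retractions should then certify that the resulting $n-2$ commutators generate a free abelian subgroup of rank $n-2$.

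For the upper bound, my plan is to generalize the explicit motion planner of \thmref{thm:threepoints}. Its two main ingredients extend verbatim to general $n$: the discriminant map $\delta\colon C(D,n)\to S^1$ given by $\delta(\set{z_1,\dots,z_n})=\prod_{i<j}(z_i-z_j)^2/|\prod_{i<j}(z_i-z_j)^2|$, and the free $S^1$-action on $C(D,n)$ by rotation about the origin, which satisfies $\delta(\theta\cdot x)=\theta^{n(n-1)}\delta(x)$. Given $(x,y)\in C(D,n)\times C(D,n)$, this allows one to continuously rotate $x$ until $x$ and $y$ are cooriented, fusing the cooriented and non-cooriented loci into a single family of ENRs. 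I would then stratify the cooriented locus by the combinatorial type of coincidences of real parts (as in \thmref{thm:upperannulus}), deformation retract each stratum onto a union of $S^1$-orbits lying in the diagonal, and by a careful bookkeeping reduce the count from $2n-1$ strata to $2n-2$, matching the desired bound $\TC(C(D,n))\le 2n-3$.

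The main obstacle is the upper bound. The $n=3$ proof saves one unit against the naive estimate $\cat(C(D,n)\times C(D,n))=2n-2$ via a very tight interplay between the $S^1$-coorientation, the collinear/generic dichotomy, and the specific geometry of the $S^1$-equivariant retracts $T_R$ and $L_R$; for larger $n$, the corresponding reduced skeletons must be replaced by more intricate semialgebraic sets whose disjointness and relative openness properties are much harder to verify, and it is not obvious a priori that the single-unit saving persists. A secondary obstacle for the lower bound is proving that the $n-2$ proposed commutators really generate a free abelian group of the predicted rank; here a cohomological computation via the Salvetti-style CW model of $F(D,n)$, or via the Kohno presentation of $H^*(F(D,n);\Z)$, may well be required in place of a purely group-theoretic argument.
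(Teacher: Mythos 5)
This statement is one of the paper's own open conjectures: the paper proves only $n-1+\chd([P_n,P_n])\le\TC(C(D,n))\le 2n-2$ (\thmref{thm:disc}), $\TC(C(D,n))\ge 2n-2\lfloor\sqrt{n/2}\rfloor-3$ (\thmref{thm:discsqrt}) and the case $n=3$ (\thmref{thm:threepoints}), and explicitly records $\TC(B_n)=2n-3$ as a conjecture. Your proposal does not close the gap either; it is a research plan whose two key steps are exactly the open problems, and both steps as sketched have genuine holes. For the lower bound, the chain of little discs $D_2,\dots,D_{n-1}$ in which consecutive discs \emph{share a strand} does not obviously produce a copy of $\Z^{n-2}$: subgroups of $P_n$ supported on two overlapping discs do not commute in general, and the disjointness (nestedness) of the little discs is precisely what makes the images of the $P_3$ factors commute in the proof of \lemref{lem:comm} -- it is also why that construction only reaches rank roughly $(n-2)/2$. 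You give no argument that your $n-2$ commutators commute, nor that they are independent (for the latter the retraction trick used for $\gamma$ in \thmref{thm:discsqrt} is unavailable, since forgetting strands kills commutators), so the claimed embedding $\Z^{n-2}\hookrightarrow[P_n,P_n]$ is unsubstantiated; this is the actual content of the conjecture $\chd([P_n,P_n])=n-2$, not a bookkeeping step.

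For the upper bound, ``stratify the cooriented locus by coincidences of real parts and reduce the count by careful bookkeeping'' names no mechanism. The one-unit saving in the $n=3$ proof comes from very specific geometry: the equivariant retracts $L_R$ and $T_R$ are single $S^1$-orbits (circles), so $P\cap(L_R\times L_R)$ and $P\cap(T_R\times T_R)$ are finite unions of orbits that can be wound into the diagonal; nothing of the sort is available for general $n$, where the analogous reduced strata are high-dimensional. Moreover the annulus argument of \thmref{thm:upperannulus} that you invoke rests on the projection $\A\to S^1$ and the resulting map to $\mathrm{Sym}_n(S^1)$, which has no analogue for the disc, and the rotation action on $C(D,n)$ is not free (configurations with rotational symmetry have finite stabilizers), so ``fusing'' the cooriented and non-cooriented loci needs more care than a verbatim extension of formula (\ref{deltatheta}). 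As it stands, neither inequality of the conjectured equality $\TC(C(D,n))=2n-3$ is established by your outline.
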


\end{document}